\documentclass[]{article}

\usepackage{dsfont}
\usepackage{graphicx}
\usepackage{amsmath}
\usepackage{amssymb}
\usepackage{amsthm}
\usepackage{mathrsfs}
\DeclareMathAlphabet{\mathsc}{OT1}{cmr}{m}{sc}
\usepackage{pxfonts}
\usepackage{mathdots}
\usepackage{sectsty}
\usepackage{tikz}
\usepackage{adjustbox}
\usepackage{enumitem}
\usepackage{caption}
\usepackage{bbold}
\allowdisplaybreaks
\usepackage[authoryear,round]{natbib}
\usepackage[dvipsnames]{xcolor}
\usepackage[
  colorlinks=true,
  linkcolor=OliveGreen,
  citecolor=OliveGreen,
  urlcolor=OliveGreen,
  pdftitle={Sine beta is tail trivial},
  pdfauthor={Theodoros Assiotis and Kohei Suzuki}
]{hyperref}
\usepackage[nameinlink,capitalise,noabbrev]{cleveref}

\sectionfont{\scshape\centering\fontsize{11}{14}\selectfont}
\subsectionfont{\scshape\fontsize{11}{14}\selectfont}
\usepackage{fancyhdr}
\usepackage[nottoc,notlot,notlof]{tocbibind}

\newcommand\shorttitle{Collision and non-collision on configuration space}
\newcommand\authors{T. Assiotis and K. Suzuki}

\newtheorem{thm}{Theorem}[section]
\newtheorem{lemma}[thm]{Lemma}
\newtheorem{proposition}[thm]{Proposition}

\newtheorem{cor}[thm]{Corollary}
\newtheorem*{theorem*}{Theorem}

\newcommand{\E}{\mathcal E}
\newcommand{\D}{\mathcal D}

\newcommand{\supp}{\mathsf{supp}}

\newcommand{\R}{\mathbb{R}}
\newcommand{\N}{\mathbb{N}}

\newtheorem{theorem}[thm]{Theorem}
\crefname{thm}{Theorem}{Theorems}
\crefname{theorem}{Theorem}{Theorems}
\crefname{lemma}{Lemma}{Lemmas}
\crefname{proposition}{Proposition}{Propositions}
\crefname{cor}{Corollary}{Corollaries}
\crefname{remark}{Remark}{Remarks}
\crefname{definition}{Definition}{Definitions}

\newcommand{\Lip}{\mathsf{Lip}}
\newcommand{\dd}{\,\mathrm{d}}

\newcommand{\cF}{\mathscr{F}}
\newcommand{\cT}{\mathscr{T}}
\newcommand{\muB}{\boldsymbol{\mu}_\beta}
\newcommand{\angles}[1]{\left\langle #1\right\rangle}
\newcommand{\Span}{\mathop{\mathsf{span}}}

\newcommand{\U}{\mathsf{Conf}}

\newcommand{\cquad}{\ , \quad}
\newcommand{\comma}{\ ,}
\newcommand{\fstop}{\ .}

\newcommand{\cG}{\mathcal{G}}
\newcommand{\defi}{\overset{\mathrm{def}}{=}}
\renewcommand\shorttitle{$\mathsf{Sine}_\beta$ tail trivial}

\renewcommand\authors{T. Assiotis and K. Suzuki}

\title{\large\bfseries $\mathsf{Sine}_\beta$ is tail trivial}
\author{\small THEODOROS ASSIOTIS AND KOHEI SUZUKI}
\date{}

\begin{document}

\maketitle

\begin{abstract}
We prove that the \(\mathsf{Sine}_\beta\) point process is tail trivial for every \(\beta>0\). The key ingredient in the proof is a certain uniform entropy bound, that makes  use of the DLR equations of~\citet{DLR} and the bounded two-point correlation density obtained by~\citet{qu2025pair} and~\citet{AssiotisNajnudel}, that allows us to show that tail events are translation invariant. Tail triviality then follows either from the uniqueness theorem of~\citet{AssiotisDLRUniqueness} or from translation mixing, which we derive from the correlation estimates of~\citet{dumaz2026longrangecorrelationsinebetapoint}. As consequences, we obtain the following applications: (a)~extremality of $\muB$ in the full convex class of fixed-kernel Gibbs measures without stationarity and finite-energy conditions; (b)~$\mathrm{L}^2$-ergodicity of the associated infinite-particle reversible diffusion for every $\beta>0$; and (c) the almost-sure collision/noncollision dichotomy with threshold $\beta=1$ for the diffusion.
\end{abstract}

\section{The main result}
\paragraph{The $\mathsf{Sine}_{\beta}$ point process.}
Fix $\beta>0$. For each integer $N\geq1$, let $(\theta_1,\ldots,\theta_N)\in(-\pi,\pi]^N$ have
the circular $\beta$-ensemble density
$$
  \frac{1}{Z_{N,\beta}}
  \prod_{1\leq i<j\leq N}
  \left|\mathrm e^{\mathrm i\theta_i}-\mathrm e^{\mathrm i\theta_j}\right|^\beta
  \prod_{i=1}^N\frac{\dd\theta_i}{2\pi}.
$$
Since the mean angular spacing is $2\pi/N$, consider the unfolded
point process
$$
  \boldsymbol{\Xi}_N\defi\sum_{i=1}^N\delta_{N\theta_i/(2\pi)}.
$$
As $N\to\infty$, $\boldsymbol{\Xi}_N$ converges locally in distribution to a
translation-invariant simple point process $\boldsymbol{\Xi}_\beta$ on
$\mathbb R$ of intensity one. We denote its law by $\muB$ and call the process
$\mathsf{Sine}_{\beta}$. For the classical values $\beta=1,2,4$, the $\mathsf{Sine}_{\beta}$
processes were already known through their explicit integrable structures:
$\mathsf{Sine}_2$ is determinantal, whereas $\mathsf{Sine}_1$ and
$\mathsf{Sine}_4$ are Pfaffian point processes.  For general $\beta>0$,
the infinite-volume bulk process was constructed independently by~\citet{KillipStoiciu}, as the scaling limit of the circular $\beta$-ensemble,
and by~\citet{ValVir09}, as the bulk limit of the Gaussian $\beta$-ensemble
 via the Brownian-carousel description.  The two limiting processes were
 subsequently shown by~\citet{ValkoViragCircular} to coincide.
 It may be viewed as an infinite-volume equilibrium state of the
 one-dimensional logarithmic gas through the Dobrushin--Lanford--Ruelle (DLR)~formalism~\citep{DLR}, and as the reversible measure of an
 associated infinite-particle diffusion~\citep{Osa12, Osa13, Tsa16, Suz22b}. For $\beta=1,2,4$, this diffusion is identified below with infinite-dimensional Dyson Brownian motion.

\paragraph{Main result} Write $\N=\{1,2,\ldots\}$, $\N_0=\N\cup\{0\}$. The law~\(\muB\) is a Borel probability measure on the configuration space of locally finite counting measures, with multiplicities allowed,
$$\U(\R)\defi \biggl\{\boldsymbol{\gamma} =\sum_{i=1}^N \delta_{x_i}: N \in \N_0 \cup \{+\infty\},\ x_i \in \R,\ \boldsymbol{\gamma}(K)<\infty \ \text{for all compact $K \subset \R$}\biggr\},$$
endowed with the vague topology; $\mathscr B(\mathsf{Conf}(\R))$ denotes its Borel $\sigma$-field. Let
\[
 \mathsf{Conf}_{\mathrm{s}}(\R)
 \defi \{\boldsymbol{\gamma}\in\mathsf{Conf}(\R):
 \boldsymbol{\gamma}(\{x\})\leq 1\text{ for every }x\in\R\}
\]
be its simple subspace. The process is simple, so \(\muB(\mathsf{Conf}_{\mathrm{s}}(\R))=1\).
For a Borel set $D\subset\R$, let $\mathsf{Conf}(D)$ denote the configurations in $\mathsf{Conf}(\R)$ supported in $D$. Write $\boldsymbol{\gamma}_D=\boldsymbol{\gamma}|_D$, and let
\[
  \cF_{D}\defi\sigma\bigl(\boldsymbol{\gamma} \mapsto \boldsymbol{\gamma}(B): B\subset D\text{ bounded Borel}\bigr).
\]
The {\it tail \(\sigma\)-field} on $\U(\R)$ is defined as
\[
  \cT\defi\bigcap_{R>0}\cF_{[-R,R]^c}.
\]
\begin{theorem}\label{thm:main}
$\mathsf{Sine}_\beta$ is tail trivial for every $\beta>0$. Namely, 
\[
  \muB(A)\in\{0,1\} \qquad A \in\cT.
\]
\end{theorem}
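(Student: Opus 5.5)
The plan has two stages. First show that every tail event is translation invariant modulo $\muB$-null sets. Then conclude triviality from translation ergodicity, which I will obtain as mixing. Throughout, let $\tau_t\boldsymbol{\gamma}=\boldsymbol{\gamma}(\cdot+t)$ denote translation. Since $\muB$ is $\tau_t$-invariant and $\cT$ is $\tau_t$-stable ($\tau_t^{-1}\cF_{[-R,R]^c}\subset\cF_{[-R-|t|,R+|t|]^c}$), the map $A\mapsto\tau_t^{-1}A$ acts on $\cT$.

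\textbf{Step 1: tail events are translation invariant.} Fix $A\in\cT$ and $t\in\R$. I will show $\muB(A\,\triangle\,\tau_t^{-1}A)=0$. By the DLR equations of~\citet{DLR}, the conditional law of $\boldsymbol{\gamma}_{[-R,R]}$ given $\cF_{[-R,R]^c}$ is an explicit Gibbs kernel $\Pi_R(\cdot\mid\boldsymbol{\gamma}_{[-R,R]^c})$. This kernel is a finite log-gas in $[-R,R]$ with a fixed number of particles and an external potential generated by the outside configuration.

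Since $A\in\cF_{[-R,R]^c}$ for every $R$, we have
\[
\muB(A\cap\tau_t^{-1}A)=\mathbb E\big[\mathbf 1_A\,\mathbb P(\tau_t^{-1}A\mid\cF_{[-R,R]^c})\big].
\]
The strategy is to compare the law of the translated configuration with the original one, conditionally on the far region. Concretely, I will bound the relative entropy between $\muB$ restricted to $\cF_{[-R,R]^c}$ and its image under $\tau_t$ by a quantity that is uniform in $R$, or at least $o(1)$ after a suitable averaging.

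In practice I would build a coupling. Inside a big window $[-L,L]$, shift the points smoothly by a map $\phi_L(x)=x+t\,\psi(x/L)$, where $\psi$ equals $1$ on $[-1,1]$ and decays to $0$ on $[-2,2]$. Then compute the Radon--Nikodym derivative of $\muB\circ\phi_L^{-1}$ against $\muB$ through the DLR kernel on $[-2L,2L]$. This derivative consists of a Jacobian term plus the change in the log-energy $\beta\sum_{i<j}\log|x_i-x_j|$.

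\textbf{Step 2: the uniform entropy bound (main obstacle).} The entropy has two contributions. The Jacobian term is $\sum_i\log\phi_L'(x_i)$, and its mean is controlled by the intensity. The energy change has expansion
\[
\beta\sum_{i<j}\Big[\frac{\phi(x_i)-\phi(x_j)}{x_i-x_j}-1\Big]+\text{quadratic}.
\]
Its first-order part is a linear statistic whose mean vanishes by stationarity, up to boundary effects. Its second-order part has expectation bounded by
\[
\iint \big(t\,(\psi(x/L)-\psi(y/L))/(x-y)\big)^2\rho_2(x,y)\,dx\,dy .
\]
Here I use that the two-point correlation $\rho_2$ is bounded~\citep{qu2025pair,AssiotisNajnudel}, and this integral is $O(t^2)$ uniformly in $L$ by scaling. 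The interactions with points outside $[-2L,2L]$ are handled in the same way, because $\phi_L$ is the identity there.

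This gives $\sup_L H(\muB\circ\phi_L^{-1}\mid\muB)\le C(t)<\infty$. On $\cT$-measurable events, $\phi_L$ agrees with $\tau_t$ off $[-2L,2L]$, so for $A\in\cT$ and large $L$ the laws can be compared. By Pinsker-type arguments with bounded entropy, together with a second scaling $t\to t/n$ composed $n$ times (entropy $O(t^2/n^2)$ each, giving total variation $O(t/\sqrt n)$), I conclude $\muB(\tau_t^{-1}A\,\triangle\,A)=0$.

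The delicate part is controlling the first-order (linear-statistic) term and the cross term with the infinitely many outside points. I expect to need the DLR representation to renormalize the divergent log-sum, and stationarity to cancel the means.

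\textbf{Step 3: conclusion.} Every $A\in\cT$ is now $\muB$-a.s. translation invariant. I would then use mixing of $\muB$ under translations. Mixing follows from the decay of truncated correlations proven in~\citet{dumaz2026longrangecorrelationsinebetapoint}: covariances of local functionals tend to $0$ as the shift goes to infinity, and an approximation argument extends this to all $L^2$ functionals. Mixing implies ergodicity, so $\muB(A)\in\{0,1\}$.

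Alternatively, conditioning $\muB$ on a tail event $A$ of positive measure gives a measure that still satisfies the same DLR equations and, by Step 1, is stationary. The uniqueness theorem of~\citet{AssiotisDLRUniqueness} then forces $\muB(\cdot\mid A)=\muB$, hence $\muB(A)^2=\muB(A)$.
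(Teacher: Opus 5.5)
Your architecture matches the paper's. You bound, uniformly in $L$, the entropy of a compactly supported deformation that translates $[-L,L]$ by $t$, using the DLR conditional densities, the bounded two-point density and the scale invariance of $\|\cdot\|_{\dot{\mathrm H}^{1/2}}$, and then finish with mixing or DLR uniqueness. The genuine gap is the step from that bound to $\muB(\tau_t^{-1}A\,\triangle\,A)=0$.

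\textbf{The geometry is reversed.} The map $\phi_L$ (acting on configurations) is the identity off $[-2L,2L]$ and agrees with $\tau_t$ only on $[-L,L]$. A tail event therefore sees $\phi_L$ as the identity, $\mathbf 1_A\circ\phi_L=\mathbf 1_A$ (Lemma~\ref{lem:tailinvariance}), not as $\tau_t$. It is \emph{local} observables that see $\phi_L$ as $\tau_t$.

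\textbf{The Pinsker/composition argument fails.} Bounded entropy gives only bounded, not small, total variation, and subdividing $t$ does not help. Push-forwards contract total variation, so Pinsker plus the triangle inequality give $n\cdot O(t/n)=O(t)$ for the composed map, not $O(t/\sqrt n)$. Relative entropy does not add as $n\cdot O(t^2/n^2)$ under composition of deterministic maps. Moreover, the $n$-fold composition is itself a deformation translating a window of length comparable to $L$ by $t$, so nothing makes its entropy small.

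\textbf{The missing mechanism} is that of Proposition~\ref{prop:tailtranslation}. Approximate $h=\mathbf 1_A$ in $\mathrm L^1(\muB)$ by $F_n=\mathbb E_{\muB}[h\mid\cF_{[-n,n]}]$. Take $L_n$ so large that $F_n\circ\phi_{L_n}=F_n\circ\tau_t$ while $h\circ\phi_{L_n}=h$. Stationarity then gives $\|h\circ\tau_t-h\|_{\mathrm L^1(\muB)}\le\|h-F_n\|_{\mathrm L^1(\muB)}+\int|F_n-h|\,\dd\nu_n$ with $\nu_n=(\phi_{L_n})_*\muB$. The entropy bound enters only through uniform integrability of $\dd\nu_n/\dd\muB$ (de la Vall\'ee-Poussin), which sends the last integral to zero. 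What is needed is boundedness of the entropy, not smallness.

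\textbf{Step 2 leaves the first-order term open.} The claim ``mean zero by stationarity up to boundary effects'' is not justified. The terms linear in $t$ are only conditionally convergent, for instance the exterior contribution $\beta t\sum_{x\in\boldsymbol{\gamma}_\Lambda}\psi(x/L)\sum_{u\in\boldsymbol{\eta}}(u-x)^{-1}$. Already $\mathbb E_{\muB}\sum_{u\in\boldsymbol{\gamma},\,|u|>b}|u-x|^{-1}=\int_{|u|>b}|u-x|^{-1}\dd u=\infty$, so their expectations cannot be computed by Campbell's formula and Fubini. Lemma~\ref{lem:dlrentropy} avoids this by adding the entropies of the deformations by $+t$ and $-t$. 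The linear terms then cancel exactly, leaving the nonnegative integrands $J_{\phi,a}$ and $K_{\phi,a}$. For these, monotone convergence handles the renormalised exterior product and the Campbell formulas apply. Integrability of each $L_\pm$ separately follows from $\mathbb E_{\muB}[(L_\pm)^-]\le\mathrm e^{-1}$ and $(L_\pm)^+\le S+(L_\mp)^-$.

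\textbf{Uniqueness route.} If you take this route, you must also check the finite Lebl\'e--Serfaty energy hypothesis of~\citet{AssiotisDLRUniqueness} for $\muB(\cdot\mid A)$. It follows from the bounded density $\mathbf 1_A/\muB(A)$ together with the lower bound on the energy density.
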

The tail triviality was previously known in the determinantal case $\beta=2$; see~\citep{BufetovQiuShamov2021, Lyons2018Tail, OsadaOsada2018}. Our result extends it to arbitrary $\beta>0$. 
\subsection{Applications}
\paragraph{Extremality}
An immediate application of Theorem~\ref{thm:main} is the extremality of $\muB$ as a Gibbs measure. 
Fix a bounded Borel set
$\Lambda\subset\mathbb{R}$ and put
$\boldsymbol{\eta}=\boldsymbol{\gamma}_{\Lambda^c}$, the restriction of $\boldsymbol{\gamma}$ to~$\Lambda^c=\R \setminus \Lambda$.
Number rigidity~\citep{ChhaibiNajnudel2018, DLR} yields a measurable function~$
N_\Lambda:\U(\Lambda^c)\longrightarrow\mathbb{N}_0$
such that
\[
\boldsymbol{\gamma}(\Lambda)
=
N_\Lambda(\boldsymbol{\gamma}_{\Lambda^c})
\qquad
\muB\text{-a.e.}
\]
For an exterior configuration $\boldsymbol{\eta}$, write~$N=N_\Lambda(\boldsymbol{\eta})$.
For $x\in\Lambda$, define the renormalized exterior weight by
\[
\omega(x\mid\boldsymbol{\eta})
\defi
\lim_{m\to\infty}
\prod_{\substack{u\in\boldsymbol{\eta}\\ |u|\leq m}}
\left|1-\frac{x}{u}\right|^\beta,
\]
where $m\in\N$ and, for $\muB$-a.e.~configuration~$\boldsymbol{\gamma}$, the limit exists with $\boldsymbol{\eta}=\boldsymbol{\gamma}_{\Lambda^c}$ (see~\citep{DLR}).
The corresponding partition function is
\[
Z_\Lambda(\boldsymbol{\eta})
\defi
\int_{\Lambda^N}
\prod_{1\leq i<j\leq N}|x_i-x_j|^\beta
 \prod_{i=1}^N\omega(x_i\mid\boldsymbol{\eta})
 \dd x_1\cdots \dd x_N.
\]
On a measurable set of exterior configurations of full exterior marginal measure, the DLR theorem gives both the existence of the preceding limit and
\(0<Z_\Lambda(\boldsymbol{\eta})<\infty\). For a bounded measurable function
$F:\U(\mathbb{R})\to\mathbb{R}$ and an exterior configuration in this set, define
\[
\begin{aligned}
(\Gamma^\beta_\Lambda F)(\boldsymbol{\eta})
\defi{}&
\frac{1}{Z_\Lambda(\boldsymbol{\eta})}
\int_{\Lambda^N}
F\left(\boldsymbol{\eta}+\sum_{i=1}^N\delta_{x_i}\right) 
\prod_{1\leq i<j\leq N}|x_i-x_j|^\beta
 \prod_{i=1}^N\omega(x_i\mid\boldsymbol{\eta})
 \dd x_1\cdots \dd x_N.
\end{aligned}
\]
On the null complement, set \(\Gamma^\beta_\Lambda(\boldsymbol{\eta},\cdot)=\delta_{\boldsymbol{\eta}}\); this gives an everywhere-defined measurable probability kernel. For a full configuration, use the convention
\[
 (\Gamma^\beta_\Lambda F)(\boldsymbol{\gamma})
 \defi (\Gamma^\beta_\Lambda F)(\boldsymbol{\gamma}_{\Lambda^c}).
\]
The DLR equation reads
\[
\mathbb{E}_{\muB}
\left[
F\mid\cF_{\Lambda^c}
\right](\boldsymbol{\gamma})
=
(\Gamma^\beta_\Lambda F)(\boldsymbol{\gamma}_{\Lambda^c})
\qquad
\muB\text{-a.e.}
\]
For a Borel probability measure $\nu$ on $\mathsf{Conf}(\R)$, we use the convention
\[
 \nu(F)=\mathbb E_\nu[F]\defi
 \int_{\mathsf{Conf}(\R)}F(\boldsymbol{\gamma})\,\nu(\dd\boldsymbol{\gamma}).
\]
For $\nu\in\mathcal{P}(\U(\mathbb{R}))$, we write~$\nu\Gamma^\beta_\Lambda=\nu$ if for every bounded Borel function~$F: \U(\R) \to \R$,
$$\nu(\Gamma^\beta_\Lambda F) =\nu(F).$$
Define the following convex subset of the space~$\mathcal P(\U(\R))$ of Borel probability measures on~$\U(\R)$:
\[
\cG(\Gamma^\beta)
\defi
\left\{
\nu\in\mathcal{P}(\U(\mathbb{R})):
\nu\Gamma^\beta_\Lambda=\nu
\text{ for every bounded Borel }\Lambda
\right\}
\]
and write $\operatorname{ex}\cG(\Gamma^\beta)$ for the set of extreme points of $\cG(\Gamma^\beta)$. By~\citet{DLR}, $\muB \in \cG(\Gamma^\beta)$. 

We stress that no stationarity or finite-energy assumption is imposed on the measures in $\cG(\Gamma^\beta)$. The following corollary asserts extremality in this full convex class, which is not a straightforward implication of~the uniqueness theorem of~\citet[Theorem~1.1]{AssiotisDLRUniqueness} because the uniqueness theorem concerns only the restricted class of stationary finite-energy solutions of the canonical DLR equations.   The direct proof from tail triviality that we give below rules out decompositions even into nonstationary fixed-kernel solutions.
\begin{cor}\label{cor:extremality} For every $\beta>0$, 
$$\muB\in\operatorname{ex}\cG(\Gamma^\beta).$$
\end{cor}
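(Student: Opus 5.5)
We derive extremality from Theorem~\ref{thm:main} via the classical Georgii argument: a Gibbs measure in $\cG(\Gamma^\beta)$ is extreme if and only if it is trivial on $\cT$. Only the easy direction is needed here, namely that tail triviality implies extremality. The one subtlety is that our kernels $\Gamma^\beta_\Lambda$ are defined through number rigidity and the renormalized weights, and only on a set of full $\muB$-exterior measure. The argument must therefore be carried out within measures absolutely continuous with respect to $\muB$.

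Suppose $\muB=t\nu_1+(1-t)\nu_2$ with $t\in(0,1)$ and $\nu_1,\nu_2\in\cG(\Gamma^\beta)$. Then $\nu_1\le t^{-1}\muB$, so $\nu_1\ll\muB$. Let $f=\dd\nu_1/\dd\muB$, which is bounded by $t^{-1}$. The main step is to show that $f$ has a $\cT$-measurable version. Fix a bounded $\Lambda$ and a bounded Borel $F$. Since $\nu_1\Gamma^\beta_\Lambda=\nu_1$ and $\Gamma^\beta_\Lambda F$ is $\cF_{\Lambda^c}$-measurable, we get
\[
\muB(fF)=\nu_1(F)=\nu_1(\Gamma^\beta_\Lambda F)=\muB\bigl(f\,\Gamma^\beta_\Lambda F\bigr)=\muB\bigl(\mathbb E_{\muB}[f\mid\cF_{\Lambda^c}]\,\Gamma^\beta_\Lambda F\bigr).
\]
By the DLR equation for $\muB$, $\Gamma^\beta_\Lambda F=\mathbb E_{\muB}[F\mid\cF_{\Lambda^c}]$ holds $\muB$-a.e. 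The right-hand side therefore equals $\muB\bigl(\mathbb E_{\muB}[f\mid\cF_{\Lambda^c}]\,F\bigr)$. Since $F$ is arbitrary, $f=\mathbb E_{\muB}[f\mid\cF_{\Lambda^c}]$ $\muB$-a.e., so $f$ is $\muB$-a.e. equal to an $\cF_{\Lambda^c}$-measurable function.

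We now apply this with $\Lambda=[-R,R]$ for $R\in\N$. By reverse martingale convergence, $\mathbb E_{\muB}[f\mid\cF_{[-R,R]^c}]\to\mathbb E_{\muB}[f\mid\cT]$ $\muB$-a.e. as $R\to\infty$. Each term equals $f$, so $f=\mathbb E_{\muB}[f\mid\cT]$ a.e., and $f$ is $\cT$-measurable modulo $\muB$-null sets. This is exactly the step where the use of the fixed kernel matters. It works because we only ever integrate against $\muB$-absolutely continuous measures, so the null-set ambiguities in the definition of $\Gamma^\beta_\Lambda$ (including the convention $\delta_{\boldsymbol\eta}$ off the good set) are harmless.

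By Theorem~\ref{thm:main}, every $\cT$-measurable function is $\muB$-a.e. constant. Hence $f$ is a.e. constant, and since $\nu_1$ is a probability measure, $f\equiv1$. Thus $\nu_1=\muB$, and then $\nu_2=\muB$ as well. Therefore $\muB\in\operatorname{ex}\cG(\Gamma^\beta)$. No stationarity or finite-energy hypothesis on $\nu_1,\nu_2$ was used; the only input was domination by $\muB$, which comes automatically from the convex decomposition.
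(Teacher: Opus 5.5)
Your proposal is correct and matches the paper's proof. You use the same density $f=\dd\nu_1/\dd\muB$, use invariance under $\Gamma^\beta_\Lambda$ together with the DLR equation to get $f=\mathbb E_{\muB}[f\mid\cF_{\Lambda^c}]$, apply reverse martingale convergence along $\Lambda=[-n,n]$ to make $f$ tail-measurable, and conclude $f=1$ from Theorem~\ref{thm:main}.
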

\begin{proof}
We use the usual implication from tail triviality to extremality; see~\citet[Theorem~7.7]{Geo11}. We include its short density argument for the fixed kernels defined above.
Suppose that $\muB=\alpha\nu_1+(1-\alpha)\nu_2$, where $0<\alpha<1$ and $\nu_1,\nu_2\in\cG(\Gamma^\beta)$. For each $i\in\{1,2\}$, we have $\nu_i\ll\muB$; set $f_i=\dd\nu_i/\dd\muB$. For every bounded Borel $\Lambda$ and every bounded Borel $F$, invariance under $\Gamma^\beta_\Lambda$ and the DLR equation give
\[
 \int_{\mathsf{Conf}(\R)}Ff_i\,\dd\muB
 =\int_{\mathsf{Conf}(\R)}(\Gamma^\beta_\Lambda F)f_i\,\dd\muB
 =\int_{\mathsf{Conf}(\R)}F\,\mathbb E_{\muB}[f_i\mid\cF_{\Lambda^c}]\,\dd\muB.
\]
Thus $f_i=\mathbb E_{\muB}[f_i\mid\cF_{\Lambda^c}]$ $\muB$-a.e. Taking $\Lambda=[-n,n]$, $n\in\N$, and applying the reverse martingale theorem shows that $f_i$ is measurable with respect to the completion of $\cT$. Theorem~\ref{thm:main} therefore makes $f_i$ constant; since $\int_{\mathsf{Conf}(\R)}f_i\,\dd\muB=1$, we have $f_i=1$ $\muB$-a.e. and hence $\nu_i=\muB$.
\end{proof}

\paragraph{Ergodicity}
Another application is the $\mathrm{L}^2$-ergodicity of the associated reversible diffusion process. 
Let $\mathcal D^{\mathrm{loc}}_\infty$ be the class of bounded local functions whose symmetric finite-volume representatives on every particle sector are $C^\infty$, as in~\cite{Osa96} (or ~\citet[Section~1.1]{assiotis2026collision}), and set
\[
 \mathcal E_\beta(F)
 \defi \frac12\int_{\U(\R)} \sum_{x\in \boldsymbol{\gamma}}
 |\nabla_x F(\boldsymbol{\gamma})|^2\,\muB(\dd\boldsymbol{\gamma})
 \in[0,\infty],\qquad F\in\mathcal D^{\mathrm{loc}}_\infty.
\]
Put
\[
 \mathcal D_{\beta,\circ}
 \defi \{F\in\mathcal D^{\mathrm{loc}}_\infty\cap \mathrm{L}^2(\muB):
 \mathcal E_\beta(F)<\infty\},
\]
and define the associated bilinear form on this pre-domain by
\[
 \mathcal E_\beta(F,G)
 \defi \frac14\bigl(\E_\beta(F+G)-\E_\beta(F-G)\bigr),
 \qquad F,G\in\mathcal D_{\beta,\circ};
\]
see also~\cite[(1.2)]{Osa96}. For example, if $m\in\N$, $f_1,\ldots,f_m\in C_c^\infty(\R)$ and $\Theta\in C_b^\infty(\R^m)$, then, for the cylinder function
\[
 F(\boldsymbol{\gamma})
 \defi\Theta\bigl(\angles{f_1,\boldsymbol{\gamma}},\ldots,
 \angles{f_m,\boldsymbol{\gamma}}\bigr),
\]
the particle gradient is the derivative obtained by moving the particle at $x$:
\[
 \nabla_xF(\boldsymbol{\gamma})
 =\left.\frac{\mathrm d}{\mathrm d y}
 F(\boldsymbol{\gamma}-\delta_x+\delta_y)\right|_{y=x}
 =\sum_{i=1}^m
 (\partial_i\Theta)\bigl(\angles{f_1,\boldsymbol{\gamma}},\ldots,
 \angles{f_m,\boldsymbol{\gamma}}\bigr)f_i'(x).
\]
For every $\beta>0$, $(\E_\beta,\mathcal D_{\beta,\circ})$ is closable on $\mathrm{L}^2(\muB)$, and its closure $(\E_\beta, \mathcal D_\beta)$ is a quasi-regular strongly local $\muB$-symmetric Dirichlet form, see~\cite{Osa96, Osa13} for $\beta=1,2,4$ and \cite{Suz22b} and \cite[Theorem~1.5]{assiotis2026collision} for $\beta>0$. Here and below, ``q.e.'' means outside a set of zero $(\E_\beta,\mathcal D_\beta)$-capacity.

By a fundamental theorem of Dirichlet form theory (e.g.,~\citet[Theorem~1.5.2]{CheFuk11}), $(\mathcal E_\beta, \mathcal D_\beta)$ therefore admits an associated $\muB$-reversible diffusion process in $\U(\R)$. Its transition semigroup on $\mathrm{L}^2(\U(\R),\muB)$ satisfies
\[
 T_t^{(\beta)}=\mathrm e^{t\mathcal A_\beta},
\]
where $(\mathcal A_\beta, \mathcal D(\mathcal A_\beta))$ is the infinitesimal generator, i.e., the unique self-adjoint operator satisfying
\[
 \E_\beta(F,G)=-\langle \mathcal A_\beta F,G\rangle_{\mathrm{L}^2(\muB)},
 \qquad F\in\mathcal D(\mathcal A_\beta),\quad G\in\D_\beta.
\]
Equivalently, for $f\in \mathrm{L}^2(\muB)$, $u_t=T_t^{(\beta)}f$ is the unique mild solution of
\[
 \partial_tu_t=\mathcal A_\beta u_t,\qquad u_0=f.
\]
Let $(\boldsymbol{\Xi}^{(\beta)}_t)_{t\geq0}$ denote the associated diffusion process in~$\U(\R)$, with law $\mathbb P^{(\beta)}_{\boldsymbol{\gamma}}$ when started from $\boldsymbol{\gamma}$; this law is defined for q.e.~$\boldsymbol{\gamma}\in\U(\R)$.
For $\beta=1,2,4$, choose a labelled version and write
$$  \boldsymbol{\Xi}^{(\beta)}_t=\sum_{i=1}^{\infty}\delta_{X_t^i} \fstop
$$
The labelled process satisfies the infinite-dimensional stochastic differential equation
$$
  \dd X_t^i
  =
  \dd B_t^i
  +
  \frac\beta2\lim_{r\to\infty}
  \sum_{\substack{j\in\N,\ j\neq i\\ |X_t^i-X_t^j|<r}}
  \frac{1}{X_t^i-X_t^j} \dd t
  \cquad i\in\mathbb N \comma
$$
for $\muB$-a.e.~initial configuration, where $\{B^i\}_{i\in\mathbb N}$ are independent standard Brownian
motions in $\mathbb R$. 
The limits are
defined through the logarithmic derivative of $\muB$ and
converge locally in $\mathrm{L}^p$ with respect to the $1$-Campbell measure for
every $1<p<2$, see~\citet[Theorems~2.4 and~8.2]{Osa12}. It follows from~\citet[Theorem~13.1 and Remark~12.1]{OsaTan20}, together with Theorem~\ref{thm:main}, that this associated labelled solution is strong and is unique in the sense specified there for $\muB$-a.e.~initial configuration. The solution is called {\it infinite-dimensional Dyson Brownian motion with inverse temperature~$\beta$}.

We say that $(\mathcal E_\beta, \mathcal D_\beta)$ is {\it irreducible} if
$F\in\mathcal D_\beta$ and $\mathcal E_\beta(F)=0$ imply that $F$ is constant $\muB$-a.e. It is equivalent to the~$\mathrm{L}^2$-{\it ergodicity} of $\{T^{(\beta)}_t\}_{t\ge0}$:
$$T^{(\beta)}_t f \xrightarrow{t \to \infty} \int_{\mathsf{Conf}(\R)}f(\boldsymbol{\gamma})\,\muB(\dd\boldsymbol{\gamma}) \cquad f \in \mathrm{L}^2(\muB) \comma$$
where the convergence is in $\mathrm{L}^2(\muB)$, 
see~\cite[Proposition 2.3]{ALBEVERIO1997394}. Since $1\in\mathcal D_\beta$, $\mathcal E_\beta(1)=0$, and $\muB$ is a probability measure, the form is recurrent. From the process perspective, ergodicity therefore implies that, for each bounded Borel function $f:\mathsf{Conf}(\R)\to\R$,
$$\frac{1}{t} \int_0^t f(\boldsymbol{\Xi}^{(\beta)}_s) \dd s \xrightarrow{t \to \infty} \int_{\mathsf{Conf}(\R)}f(\boldsymbol{\gamma})\,\muB(\dd\boldsymbol{\gamma}),$$
where the convergence is $\mathbb P^{(\beta)}_{\boldsymbol{\gamma}}$-almost surely for q.e.~$\boldsymbol{\gamma}\in\U(\R)$, see, e.g.,~\citet[Corollary~4.5]{Suz23b} for the proof. 

For the present form,~\citet[Remark~1.6]{assiotis2026collision} applies~\citet[Corollary~I]{Suz23b} to deduce irreducibility from tail triviality and number rigidity.
\begin{cor} \label{c:ER}
The $\mathrm{L}^2$-semigroup~$\bigl(T_t^{(\beta)}\bigr)_{t \ge 0}$ is ergodic for every $\beta>0$.  Equivalently, the Dirichlet form~$(\E_\beta, \mathcal D_\beta)$ is irreducible for every $\beta>0$.
\end{cor}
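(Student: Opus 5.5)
The plan is to deduce irreducibility of $(\E_\beta,\mathcal D_\beta)$ from Theorem~\ref{thm:main} combined with number rigidity, following the route of \citet[Remark~1.6]{assiotis2026collision} and \citet[Corollary~I]{Suz23b}. Ergodicity of the semigroup then follows from the equivalence recalled above \citep[Proposition~2.3]{ALBEVERIO1997394}.

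\emph{Step 1: reduction to invariant sets.} Let $F\in\mathcal D_\beta$ with $\E_\beta(F)=0$. By the standard theory of recurrent Dirichlet forms \citep{CheFuk11}, $F$ is then $T^{(\beta)}_t$-invariant. Equivalently, all level sets $A=\{F>c\}$ are invariant sets, in the sense that $\mathbb 1_A\in\mathcal D_\beta$ and $\E_\beta(\mathbb 1_A)=0$. So it suffices to show that every invariant set $A$ satisfies $\muB(A)\in\{0,1\}$.

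\emph{Step 2: invariant sets are tail measurable modulo null sets.} Fix $\Lambda=[-R,R]$. Let $\E^{\Lambda,\boldsymbol\eta}$ be the finite-dimensional form obtained by freezing the exterior configuration $\boldsymbol\eta$ and letting the $N=N_\Lambda(\boldsymbol\eta)$ particles in $\Lambda$ move. Its reference measure is the DLR density $\prod_{i<j}|x_i-x_j|^\beta\prod_i\omega(x_i\mid\boldsymbol\eta)$ on $\Lambda^N$; this is where rigidity enters, since it fixes the particle number in $\Lambda$. By the DLR equation, $\E_\beta$ dominates the disintegration $\int \E^{\Lambda,\boldsymbol\eta}(F(\boldsymbol\eta+\cdot))\,\muB(\dd\boldsymbol\gamma)$. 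Hence $\E_\beta(\mathbb 1_A)=0$ forces $\mathbb 1_A(\boldsymbol\eta+\cdot)$ to have zero energy for the form on the sector $\Lambda^N/\mathfrak S_N$, for a.e.\ $\boldsymbol\eta$.

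That finite-dimensional form is irreducible: the density is positive and locally bounded off the diagonals, and the symmetrized configuration space of $N$ points in an interval is connected. Even for $\beta<1$, the diagonals do not disconnect the symmetric space once one passes to unordered configurations, and the measure is smooth enough there. Consequently $\mathbb 1_A(\boldsymbol\eta+\cdot)$ is a.e.\ constant in the interior coordinates, i.e.\ $\mathbb 1_A=\mathbb E_{\muB}[\mathbb 1_A\mid\cF_{\Lambda^c}]$ a.e. Letting $R\to\infty$ along integers and applying the reverse martingale theorem, as in the proof of Corollary~\ref{cor:extremality}, shows that $\mathbb 1_A$ is measurable with respect to the $\muB$-completion of $\cT$.

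\emph{Step 3: conclusion.} Theorem~\ref{thm:main} gives $\muB(A)\in\{0,1\}$. Hence $F$ is $\muB$-a.e.\ constant, which is irreducibility.

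The main obstacle is Step 2: the rigorous comparison of $\E_\beta$ with the fixed-exterior forms and the irreducibility of the latter. Both must be checked for the closure $\mathcal D_\beta$, not only for smooth local functions, and uniformly across all $\beta>0$, including the regime $\beta<1$ where particles collide. Rather than redoing this, I would invoke \citet[Corollary~I]{Suz23b}, whose hypotheses are exactly tail triviality, number rigidity, and the quasi-regular strongly local structure established in \citet[Theorem~1.5]{assiotis2026collision}. The remaining work would be to verify that the DLR kernels $\Gamma^\beta_\Lambda$ satisfy that paper's conditions on the conditional densities.
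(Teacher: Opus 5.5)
Your proposal is correct and follows the paper's own route. The paper feeds tail triviality from Theorem~\ref{thm:main}, together with number rigidity, into \citet[Corollary~I]{Suz23b}, and the verification you leave as remaining work is exactly what it takes from \citet[Remark~1.6]{assiotis2026collision}; ergodicity then follows from \citet[Proposition~2.3]{ALBEVERIO1997394}. One small correction to your Step~2 heuristic: the diagonals are polar, and so might appear to disconnect, when $\beta\ge1$, not when $\beta<1$; for every $\beta>0$, however, the unordered sector is the connected Weyl chamber, so your conclusion stands.
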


\paragraph{Collision problem} The~$\mathrm{L}^2$-ergodicity result plays an essential role in providing a complete characterisation of the collision problem of the associated diffusion process in the almost-sure sense. 
The collision set is defined as 
\[
\mathscr C \defi \{\boldsymbol{\gamma}\in\U(\R):\boldsymbol{\gamma}(\{x\})\ge2\text{ for some }x\in\mathbb R\}.
\]
Corollary~\ref{c:ER}, combined with the results of~\citet[Theorem~1.5 and Remark~1.6]{assiotis2026collision}, gives the following collision dichotomy.
\begin{cor} The following dichotomy holds:
\begin{itemize}
    \item If $0<\beta<1$, then $\mathbb P^{(\beta)}_{\boldsymbol{\gamma}}[\tau_{\mathscr{C}}<\infty]=1$ for q.e.~$\boldsymbol{\gamma}$;

\item If $\beta\geq1$, then $\mathbb P^{(\beta)}_{\boldsymbol{\gamma}}[\tau_{\mathscr{C}}=\infty]=1$ for q.e.~$\boldsymbol{\gamma}$.
\end{itemize}
Here $\tau_{\mathscr{C}}$ is the first hitting time of the collision set~$\mathscr{C}$ for the associated diffusion process $(\boldsymbol{\Xi}^{(\beta)}_t)_{t\ge 0}$ starting at the initial deterministic configuration~$\boldsymbol{\gamma}$.
\end{cor}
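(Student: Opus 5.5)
The collision dichotomy follows from the non-ergodic statements of \citet[Theorem~1.5]{assiotis2026collision}, once irreducibility (Corollary~\ref{c:ER}) is available. The plan has three steps.

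\textbf{Step 1: the zero-one law.} I would first establish that, for an irreducible recurrent quasi-regular Dirichlet form, the function
\[
h(\boldsymbol{\gamma})\defi\mathbb P^{(\beta)}_{\boldsymbol{\gamma}}[\tau_{\mathscr C}<\infty]
\]
is q.e.\ either $0$ or $1$.

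The argument is standard. Set $\Omega_\infty=\{\tau_{\mathscr C}<\infty\}$. This event is shift invariant up to null sets: $\theta_t^{-1}\Omega_\infty\subset\Omega_\infty$, and the two agree modulo hitting before time $t$. By the Markov property, $h$ is excessive, i.e.\ $T_t^{(\beta)}h\le h$. Since $\muB$ is an invariant probability measure, $\int T_t h\,\dd\muB=\int h\,\dd\muB$, which forces $T_t^{(\beta)}h=h$ $\muB$-a.e. An $\mathrm L^2$-invariant function of an irreducible form is constant, so Corollary~\ref{c:ER} shows that $h$ is a.e.\ constant, and by quasi-continuity of excessive functions it is q.e.\ constant.

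Next, recurrence together with the martingale convergence of $h(\boldsymbol{\Xi}_t)$ to $\mathbb 1_{\Omega_\infty}$ (Lévy's zero-one law) gives $h\in\{0,1\}$. Equivalently, \citet[Corollary~4.5]{Suz23b} and the usual hitting dichotomy for irreducible recurrent forms state that a Borel set is either exceptional or hit q.e.\ with probability one.

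\textbf{Step 2: the capacity of $\mathscr C$.} By the preceding step, everything reduces to deciding whether $\mathscr C$ is $(\E_\beta,\mathcal D_\beta)$-exceptional. I would take this from \citet[Theorem~1.5]{assiotis2026collision}, which I expect to give two statements:
\begin{itemize}
\item $\mathrm{Cap}(\mathscr C)>0$ when $0<\beta<1$;
\item $\mathrm{Cap}(\mathscr C)=0$ when $\beta\ge1$.
\end{itemize}
These come from a local two-particle comparison with Bessel processes of dimension $\beta+1$, using the bounded two-point correlation density near the diagonal.

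\textbf{Step 3: conclusion.} For $\beta\ge1$, an exceptional set is never hit q.e., so $\tau_{\mathscr C}=\infty$ almost surely. For $\beta<1$, positive capacity rules out $h\equiv0$ q.e., since a set of positive capacity is hit with positive probability from a set of positive measure. The dichotomy of Step~1 then forces $h\equiv1$ q.e.

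The main obstacle is Step~1 in the $\beta<1$ case, namely upgrading ``hit with positive probability'' to ``hit almost surely from q.e.\ starting point''. This is exactly where irreducibility enters, as noted in \citet[Remark~1.6]{assiotis2026collision}. I would invoke the general statement for irreducible recurrent quasi-regular forms, \citet[Theorem~3.5.6/Lemma~4.8.1]{CheFuk11}: for $B$ nearly Borel with $\mathrm{Cap}(B)>0$, one has $\mathbb P_{\boldsymbol{\gamma}}[\sigma_B<\infty]=1$ q.e. Applying this to $B=\mathscr C$ finishes the proof.
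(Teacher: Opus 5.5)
Your proposal is correct and follows the paper's route. The paper obtains this corollary in one line, by combining the irreducibility of Corollary~\ref{c:ER} with \citet[Theorem~1.5 and Remark~1.6]{assiotis2026collision}. Your Steps~1--3 unpack that combination: the capacity dichotomy for $\mathscr C$ from the collision paper, plus the standard hitting zero--one law for irreducible recurrent quasi-regular Dirichlet forms, which is needed only when $0<\beta<1$.
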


\subsection{Strategy of proof}\label{sec:strategy}

We give two proofs. The common step in both proofs is to show that every tail event is translation invariant modulo $\muB$ which is the main technical contribution of our paper. For the first proof, if $A\in\cT$ has positive probability, the conditional law $\muB(\,\cdot\mid A)$ is then stationary. Since $A$ is measurable from the exterior of every bounded region, this conditional law also satisfies the canonical DLR equations. Its density with respect to $\muB$ is bounded, so the finite electric-energy condition of \cite{LebleSerfaty2017} is inherited. Then the uniqueness theorem of~\citet[Theorem~1.1]{AssiotisDLRUniqueness} therefore identifies the conditional law with $\muB$, forcing $\muB(A)=1$.

For the second proof, we use the correlation estimates of~\citet[Theorem~1.2]{dumaz2026longrangecorrelationsinebetapoint}. We first prove mixing for products of particle counts in disjoint intervals, and then extend it to all square-integrable functions by approximation. The local exponential moments needed for this approximation follow from~\citet[Theorem~1]{HolcombValko}. Let $\tau_t$ denote translation by $t\in\R$. For a tail event $A$ and $h=\mathbf1_A$, translation invariance and mixing then give
\[
 \muB(A)
 =\mathbb E_{\muB}\bigl[h(h\circ\tau_{-t})\bigr]
 \xrightarrow[t\to+\infty]{}\mathbb E_{\muB}[h]^2
 =\muB(A)^2.
\]
Thus $\muB(A)\in\{0,1\}$.

We stress that translation invariance of tail events does not follow from stationarity alone. For example, let
\[
 \boldsymbol{\gamma}_{\mathsf U}
 \defi\sum_{k\in\mathbb Z}\delta_{k+\mathsf U},
 \qquad \mathsf U\overset{\mathrm d}{=}\mathsf{Unif}[0,1).
\]
This law is stationary, but its random shift is recoverable arbitrarily far away. More precisely, for $n\in\N$, let $u_n(\boldsymbol{\gamma})=x-n$ if $\boldsymbol{\gamma}_{[n,n+1)}=\delta_x$, and set $u_n(\boldsymbol{\gamma})=0$ otherwise. The event
\[
 A_{\mathrm{lat}}
 \defi\{\boldsymbol{\gamma}:\lim_{n\to\infty}u_n(\boldsymbol{\gamma})
       \text{ exists and belongs to }[0,1/2)\}
\]
is a tail event: deleting any finite number of terms does not change membership. On these shifted-lattice configurations it has probability $1/2$, and translation by $1/2$ exchanges it with its complement.

We therefore need a mechanism that can pass from compactly supported shifts, which tail events cannot detect, to global translations. The general idea goes back to~\citet{FrohlichPfister} and~\citet{Georgii} and has been adapted to several settings; see~\citep{Richthammer,leble2024dlr}. Here the mechanism is the entropy estimate developed in \cref{sec:entropy}. For fixed $a\in\R$ and sufficiently large $R$, let $\mathfrak T_{R,\pm}$ be the transformations of configurations induced by
\[
 x\longmapsto x\pm a\phi_R(x),
\]
where the cutoff $\phi_R$ is defined in \eqref{eq:scaledcutoff}. These maps are translations by $\pm a$ on $[-R,R]$ and the identity outside $[-2R,2R]$. The DLR equation permits a conditional change of variables. Adding the two relative entropies cancels the terms linear in the displacement, leaving nonnegative logarithmic remainders. The bounded two-point correlation density from~\citet[Theorem~1, equations~(4)--(5)]{qu2025pair} and~\citet[Theorem~1.8]{AssiotisNajnudel}, together with the scale invariance of $\|\phi_R\|_{\dot{\mathrm H}^{1/2}}$ and the bound $\Lip(\phi_R)=R^{-1}$, yields
\begin{equation}\label{eq:EntropyIneqIntro}
 \sup_{R\in[R_a,\infty)}
 \mathsf{Ent}\left((\mathfrak T_{R,+})_*\muB\middle|\muB\right)<\infty.
\end{equation}

For a tail indicator $h=\mathbf1_A$, put $F_n=\mathbb E_{\muB}[h\mid\cF_{[-n,n]}]$ for $n\in\N$. Then $F_n\to h$ in $\mathrm L^1(\muB)$. Choose $R_n$ large enough that $F_n\circ\mathfrak T_{R_n,+}=F_n\circ\tau_a$, where $\tau_a$ denotes translation by $a$, as defined in \eqref{eq:translationdef}. Since $h\circ\mathfrak T_{R_n,+}=h$, stationarity gives
\[
 \|h\circ\tau_a-h\|_{\mathrm L^1(\muB)}
 \leq\|h-F_n\|_{\mathrm L^1(\muB)}
 +\int_{\mathsf{Conf}(\R)}|F_n-h|\,\dd\bigl((\mathfrak T_{R_n,+})_*\muB\bigr)
 \longrightarrow0.
\]
The last convergence follows because the uniform entropy bound makes the densities of $(\mathfrak T_{R_n,+})_*\muB$ with respect to $\muB$ uniformly integrable; the explicit estimate is given in the proof of Proposition~\ref{prop:tailtranslation}. This proves the tail invariance needed for both concluding arguments in \cref{sec:tail}.

\paragraph{Tool and computational resources disclosure.} 
The initial idea of using an entropy estimate to show translation invariance of tail events arose from  interactive discussions with an AI tool (ChatGPT 5.5) several months ago, although that approach involved lengthy and complicated arguments. The final proofs were developed and  simplified by the authors using the DLR equation by~\cite{DLR} and the uniqueness of the Gibbs measure recently shown by the first author~\cite{AssiotisDLRUniqueness}. 
All mathematical arguments have been carefully verified by the authors, who take full responsibility for any remaining errors.

\subsection{Notation and preliminaries}

We use the intensity-one normalization of $\mathsf{Sine}_\beta$. Any other standard normalization is obtained by a deterministic dilation, and all results quoted below are stated in our convention. Throughout, \(\mathsf{Conf}(\R)\) denotes the ambient configuration space introduced above, while \(\mathsf{Conf}_{\mathrm{s}}(\R)\) denotes its simple subspace. We write configurations as
\[
  \boldsymbol{\gamma}=\sum_{i=1}^N\delta_{x_i},\qquad N\in\N_0\cup\{\infty\},
\]
allowing repeated locations, and sums indexed by $x\in\boldsymbol{\gamma}$ are understood to count multiplicity. For a nonnegative measurable function \(f\), or whenever \(f\) is \(\boldsymbol{\gamma}\)-integrable,
\[
  \angles{f,\boldsymbol{\gamma}}\defi
  \int_{\R}f(x)\boldsymbol{\gamma}(\dd x)=\sum_{i=1}^N f(x_i).
\]
The translation action is defined as
\begin{equation}\label{eq:translationdef}
  \tau_t\boldsymbol{\gamma}\defi\sum_{x\in\boldsymbol{\gamma}}\delta_{x+t},\qquad t\in\R.
\end{equation}
A measurable observable \(F\) is called local if it is measurable with respect to \(\cF_{\mathscr{K}}\) for some compact \(\mathscr{K}\subset\R\).
For probability measures $\nu$ and $\mu$ on a measurable space $(\mathsf{X},\mathscr{X})$, with $\nu$ absolutely continuous with respect to $\mu$, define the relative entropy by
\[
  \mathsf{Ent}(\nu|\mu)\defi
  \int_{\mathsf{X}}
  \log\!\left(\frac{\dd\nu}{\dd\mu}(x)\right)
  \nu(\dd x),
\]
and set the entropy to \(+\infty\) otherwise.
We use the first and second Campbell formulas in the intensity-one normalization:
\begin{align*}
 \mathbb E_{\muB}\left[\sum_{x\in\boldsymbol{\gamma}}f(x)\right]
 &=\int_\R f(x)\dd x,\\
 \mathbb E_{\muB}\left[\sum_{(x,y)\in\boldsymbol{\gamma}_{\neq}^2}g(x,y)\right]
 &=\int_{\R^2}g(x,y)\rho^{(2)}(\dd x,\dd y),
\end{align*}
for nonnegative Borel functions $f$ and $g$. Here $\rho^{(2)}$ is the second factorial correlation measure and $\boldsymbol{\gamma}_{\neq}^2$ denotes ordered pairs of distinct particles. For symmetric $g$, the corresponding unordered pair sum carries a factor $1/2$ in the second formula. Empty products equal one and empty sums equal zero.

\section{Uniform entropy bound}\label{sec:entropy}

The pair correlation representations of~\citet[Theorem~1, equations~(4)--(5)]{qu2025pair} and~\citet[Theorem~1.8]{AssiotisNajnudel} each give a bounded density for the second factorial correlation measure. For Qu--Valk\'o's representation, the cosine expectations have absolute value at most one and the coefficients are absolutely summable, so the bound is uniform in the separation. Both references use intensity $(2\pi)^{-1}$. If $\widetilde r_\beta^{(2)}$ denotes a density in that normalization, the dilation $x\mapsto x/(2\pi)$ gives
\[
 r_\beta^{(2)}(x,y)=(2\pi)^2\widetilde r_\beta^{(2)}(2\pi x,2\pi y).
\]
Thus, we get
\begin{equation}\label{eq:boundedtwopoint}
 \rho^{(2)}(\dd x,\dd y)=r_\beta^{(2)}(x,y)\dd x\dd y,
 \qquad
 M_\beta\defi\|r_\beta^{(2)}\|_{\mathrm{L}^\infty(\R^2)}<\infty.
\end{equation}
For a compactly supported Lipschitz function $\phi$, write
\[
 \|\phi\|_{\dot{\mathrm H}^{1/2}}^2
 \defi
 \iint_{\R^2}\frac{|\phi(x)-\phi(y)|^2}{|x-y|^2}\dd x\dd y.
\]

\begin{lemma}\label{lem:dlrentropy}
Let $\phi:\R\to\R$ be compactly supported and Lipschitz, let $a\in\R$, and assume
\[
 \delta\defi|a|\Lip(\phi)<1.
\]
Put $h_\pm(x)=x\pm a\phi(x)$ and define
\begin{align} \label{d:TP}
 \mathfrak T_{\phi,\pm}\!\left(\sum_{x\in\boldsymbol{\gamma}}\delta_x\right)
 \defi\sum_{x\in\boldsymbol{\gamma}}\delta_{h_\pm(x)}.
\end{align}
Then, we have
\begin{align}
 &\mathsf{Ent}\left((\mathfrak T_{\phi,+})_*\muB\middle|\muB\right)
 +\mathsf{Ent}\left((\mathfrak T_{\phi,-})_*\muB\middle|\muB\right)
 =\int_\R-\log\left(1-a^2|\phi'(x)|^2\right)\dd x \notag\\
 &\qquad\quad+\frac{\beta}{2}\iint_{\R^2}r_\beta^{(2)}(x,y)
 \left[-\log\left(1-a^2\left(\frac{\phi(x)-\phi(y)}{x-y}\right)^2\right)\right]\dd x\dd y.
 \label{eq:dlrentropyidentity}
\end{align}
Both terms on the right are nonnegative and finite. Moreover,
\begin{align}
 &\mathsf{Ent}\left((\mathfrak T_{\phi,+})_*\muB\middle|\muB\right)
 +\mathsf{Ent}\left((\mathfrak T_{\phi,-})_*\muB\middle|\muB\right)\leq\frac{a^2}{1-\delta^2}
 \left(\|\phi'\|_{\mathrm L^2(\R)}^2
 +\frac{\beta M_\beta}{2}\|\phi\|_{\dot{\mathrm H}^{1/2}}^2\right).
 \label{eq:dlrentropybound}
\end{align}
\end{lemma}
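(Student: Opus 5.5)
We will compute each relative entropy through the DLR equation, working conditionally on the exterior of a bounded window. Fix a bounded interval $\Lambda$ whose interior contains $\supp\phi$. Since $\delta<1$, each $h_\pm$ is a bi-Lipschitz increasing homeomorphism of $\R$. It fixes $\Lambda^c$ pointwise and maps $\Lambda$ onto itself. Hence $\mathfrak T_{\phi,\pm}$ leaves $\boldsymbol{\gamma}_{\Lambda^c}$ untouched and preserves $\boldsymbol{\gamma}(\Lambda)=N_\Lambda(\boldsymbol{\eta})$.

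By the DLR equation, conditionally on $\boldsymbol{\eta}=\boldsymbol{\gamma}_{\Lambda^c}$ the $N$ interior points have the explicit density
\[
Z_\Lambda(\boldsymbol{\eta})^{-1}\prod_{i<j}|x_i-x_j|^\beta\prod_i\omega(x_i\mid\boldsymbol{\eta}).
\]
The push-forward under $h_\pm$ is obtained by a change of variables. The conditional laws of $(\mathfrak T_{\phi,\pm})_*\muB$ and $\muB$ given $\cF_{\Lambda^c}$ then have an explicit Radon--Nikodym derivative, and the exterior marginals coincide. The chain rule for relative entropy therefore gives
\[
\mathsf{Ent}\bigl((\mathfrak T_{\phi,\pm})_*\muB\big|\muB\bigr)
=\mathbb E_{\muB}\Bigl[-\sum_i\log h_\pm'(x_i)-\beta\sum_{i<j}\log\frac{|h_\pm(x_i)-h_\pm(x_j)|}{|x_i-x_j|}-\sum_i\log\frac{\omega(h_\pm(x_i)\mid\boldsymbol{\eta})}{\omega(x_i\mid\boldsymbol{\eta})}\Bigr].
\]
Here the expectation is the entropy of the original measure relative to its pullback, and the sums run over the interior points.

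The exterior term is handled by rewriting $\omega$ as a limit of products. This gives $\log\bigl(\omega(h(x)\mid\boldsymbol{\eta})/\omega(x\mid\boldsymbol{\eta})\bigr)=\beta\sum_{u\in\boldsymbol{\eta}}\log\bigl|(h(x)-u)/(x-u)\bigr|$, since $h(u)=u$ for $u$ exterior. All three terms then combine into a single sum over all pairs of $\boldsymbol{\gamma}$ involving at least one point of $\supp\phi$, plus a single-point sum. Adding the $+$ and $-$ versions, we have
\[
h_\pm'(x)=1\pm a\phi'(x),\qquad \frac{h_\pm(x)-h_\pm(y)}{x-y}=1\pm a\,q(x,y),\quad q(x,y)=\frac{\phi(x)-\phi(y)}{x-y}.
\]
Thus $\log(1+s)+\log(1-s)=\log(1-s^2)$, and the terms linear in $a$ cancel. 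This is exactly where the symmetrisation is used, because those linear terms are individually only conditionally convergent. The first Campbell formula turns the single sum into the first integral. The second Campbell formula, with density $r^{(2)}_\beta$ from \eqref{eq:boundedtwopoint} and the factor $\tfrac12$ for unordered pairs, turns the pair sum into the second integral. This yields \eqref{eq:dlrentropyidentity}.

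The main obstacle is justifying the exchange of the limit defining $\omega$ with the expectation. The individual sums $\sum_u\log|1\pm a q(x,u)|$ converge only conditionally. Before taking expectations, I would pair the $\pm$ terms pointwise in the truncated products with $|u|\le m$, which gives the nonnegative summands $-\log(1-a^2q^2)$. Monotone convergence in $m$ then applies, and Campbell's formula is applied to nonnegative functions only. A secondary point is to ensure each separate entropy is finite, so that the sum is meaningful; finiteness of the right-hand side together with nonnegativity of each entropy suffices once the identity is established on truncations.

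Finally, $|q|\le\Lip(\phi)$ gives $a^2q^2\le\delta^2$ and $a^2|\phi'|^2\le\delta^2$. Using $-\log(1-s)\le s/(1-\delta^2)$ for $0\le s\le\delta^2$, the first term is bounded by $a^2\|\phi'\|_{L^2}^2/(1-\delta^2)$. For the second term we use $r^{(2)}_\beta\le M_\beta$, which bounds it by $\tfrac{\beta M_\beta}{2}a^2\|\phi\|_{\dot{\mathrm H}^{1/2}}^2/(1-\delta^2)$. This proves \eqref{eq:dlrentropybound}. Both quantities are finite, since $\phi$ is compactly supported and Lipschitz, and both terms are nonnegative because $1-s\le1$.
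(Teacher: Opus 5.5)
Your proposal is correct and follows the paper's proof essentially step for step. It conditions on the exterior of a window via the DLR density, changes variables under $h_\pm$, pairs the $\pm$ truncated exterior products pointwise so that monotone convergence applies to the nonnegative kernels $-\log(1-a^2q^2)$, and then uses the Campbell formulas and the same elementary bounds.

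The one step to state more precisely is your ``secondary point''. Write $R_\pm=\dd(\mathfrak T_{\phi,\pm})_*\muB/\dd\muB$ and $L_\pm=\log R_\pm\circ\mathfrak T_{\phi,\pm}$. The paper obtains $\mathbb E_{\muB}[L_+]+\mathbb E_{\muB}[L_-]=\mathbb E_{\muB}[L_++L_-]$ directly from $\mathbb E_{\muB}[(L_\pm)^-]=\int R_\pm(\log R_\pm)^-\,\dd\muB\le\mathrm e^{-1}$, so no truncation at the level of expectations is needed. It also uses that $h_\pm$ preserve Lebesgue-null sets, so that the limits defining $\omega$ exist at both $x_i$ and $h_\pm(x_i)$.
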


\begin{proof}
Fix a Borel representative of $\phi'$ by setting it equal to zero
where $\phi$ is not differentiable. As $\phi$ is Lipschitz, such set is a set of Lebesgue measure zero due to the Rademacher theorem.  Define
\[
 J_{\phi,a}(x)\defi-\log\left(1-a^2|\phi'(x)|^2\right),
 \qquad
 K_{\phi,a}(x,y)\defi-\log\left(
 1-a^2\left(\frac{\phi(x)-\phi(y)}{x-y}\right)^2\right)
\]
for $x\neq y$, and set $K_{\phi,a}(x,x)=0$. Since
$-\log(1-s)\leq s/(1-\delta^2)$ for $0\leq s\leq\delta^2$,
\begin{equation}\label{eq:entropykernelbounds}
 0\leq J_{\phi,a}(x)
 \leq\frac{a^2|\phi'(x)|^2}{1-\delta^2},
 \qquad
 0\leq K_{\phi,a}(x,y)
 \leq\frac{a^2}{1-\delta^2}
       \frac{|\phi(x)-\phi(y)|^2}{|x-y|^2}
 \quad(x\neq y).
\end{equation}
Moreover, $\phi\in\mathrm H^1(\R)$ by the compact support and
Lipschitz continuity properties. Using
\[
 \|\phi(\cdot+z)-\phi\|_{\mathrm L^2(\R)}^2
 \leq\min\left\{
 z^2\|\phi'\|_{\mathrm L^2(\R)}^2,
 4\|\phi\|_{\mathrm L^2(\R)}^2\right\},
\]
and splitting the following integral at $|z|=1$, we obtain
\begin{align} \label{e:EHE}
 \|\phi\|_{\dot{\mathrm H}^{1/2}}^2
 =\int_\R
   \frac{\|\phi(\cdot+z)-\phi\|_{\mathrm L^2(\R)}^2}{z^2}\dd z
 \leq2\|\phi'\|_{\mathrm L^2(\R)}^2
      +8\|\phi\|_{\mathrm L^2(\R)}^2<\infty.
\end{align}
Together with \eqref{eq:boundedtwopoint}, these estimates prove
that both integrals on the right-hand side of
\eqref{eq:dlrentropyidentity} are finite and nonnegative.

We next compute the density of the transformed process~\eqref{d:TP}.
For $x<y$,
\[
 (1-\delta)(y-x)
 \leq h_\pm(y)-h_\pm(x)
 \leq(1+\delta)(y-x).
\]
Thus $h_\pm$ are increasing bi-Lipschitz homeomorphisms, with
$h_\pm'=1\pm a\phi'>0$ almost everywhere. Choose
$\Lambda=(-b,b)$ such that $\supp\phi\Subset\Lambda$.
Then $h_\pm(\Lambda)=\Lambda$, and $h_\pm$ fixes $\Lambda^c$.
In particular, the original and transformed processes have
the same exterior configuration.

Let $\pi_\Lambda$ denote the law of the exterior
configuration under $\muB$, i.e., $\pi_\Lambda$ is the push-forward of $\muB$ by the restriction map~$\boldsymbol{\gamma} \mapsto \boldsymbol{\gamma}|_{\Lambda^c}$. By
\citep[Theorem~1.1(B),(C)]{DLR}, for $\pi_\Lambda$-almost every
exterior configuration $\boldsymbol{\eta}$, the number
$N=N_\Lambda(\boldsymbol{\eta})$ of interior particles is fixed.
Furthermore, the law disintegrated at the exterior configuration~$\boldsymbol{\eta}$ has the density
\begin{equation}\label{eq:orderedconditionaldensity}
 p_{\boldsymbol{\eta}}(\mathbf x)
 =\frac{N!}{Z_\Lambda(\boldsymbol{\eta})}
   \prod_{1\leq i<j\leq N}|x_i-x_j|^\beta
   \prod_{i=1}^N\omega(x_i\mid\boldsymbol{\eta})
\end{equation}
with respect to Lebesgue measure on
\[
 \Lambda^N_<\defi
 \{\mathbf x\in\Lambda^N:x_1<\cdots<x_N\}.
\]
The factor $N!$ accounts for ordering the particles, and
\begin{align} \label{e:LDM}
 \omega(x\mid\boldsymbol{\eta})
 =\lim_{m\to\infty}\omega_m(x\mid\boldsymbol{\eta}),
 \qquad
 \omega_m(x\mid\boldsymbol{\eta})
 \defi\prod_{\substack{u\in\boldsymbol{\eta}\\|u|\leq m}}
       \left|1-\frac xu\right|^\beta.
\end{align}
The limiting functions
are positive and finite almost everywhere.

For a fixed exterior configuration, put
\[
 H_\pm\mathbf x\defi
 (h_\pm(x_1),\ldots,h_\pm(x_N)),
 \qquad
 j_\pm(\mathbf x)\defi
 \prod_{i=1}^N(1\pm a\phi'(x_i)).
\]
Then $H_\pm$ is a bi-Lipschitz bijection of $\Lambda^N_<$,
and its Jacobian is $j_\pm>0$ almost everywhere.
Let $q_{\boldsymbol{\eta},\pm}$ denote the density of the law of the
transformed interior tuple~$H_\pm\mathbf x$, i.e., 
for every bounded Borel
function $f$ on $\Lambda^N_<$,
\[
 \int_{\Lambda^N_<}f(\mathbf y)
       q_{\boldsymbol{\eta},\pm}(\mathbf y)\dd\mathbf y
 =\int_{\Lambda^N_<}f(H_\pm\mathbf x)
       p_{\boldsymbol{\eta}}(\mathbf x)\dd\mathbf x.
\]
The substitution $\mathbf y=H_\pm\mathbf x$ gives
\begin{equation}\label{eq:conditionalchangeofvariables}
 q_{\boldsymbol{\eta},\pm}(\mathbf y)
 =\frac{p_{\boldsymbol{\eta}}(H_\pm^{-1}\mathbf y)}
        {j_\pm(H_\pm^{-1}\mathbf y)},
 \qquad
 q_{\boldsymbol{\eta},\pm}(H_\pm\mathbf x)
 =\frac{p_{\boldsymbol{\eta}}(\mathbf x)}
        {j_\pm(\mathbf x)}
\end{equation}
almost everywhere.

Write $Q_\pm\defi(\mathfrak T_{\phi,\pm})_*\muB$.
Since the transformation leaves the exterior unchanged,
disintegration gives, for every bounded Borel function $F$
on $\mathsf{Conf}(\R)$,
\[
 \int F\dd Q_\pm
 =\int\left[
   \int_{\Lambda^{N_\Lambda(\boldsymbol{\eta})}_<}
   F\left(\boldsymbol{\eta}
          +\sum_{i=1}^{N_\Lambda(\boldsymbol{\eta})}\delta_{y_i}\right)
   q_{\boldsymbol{\eta},\pm}(\mathbf y)\dd\mathbf y
   \right]\pi_\Lambda(\dd\boldsymbol{\eta}).
\]
The same formula for $\muB$ in plaec of $Q_+$ in the LHS gives  $p_{\boldsymbol{\eta}}$ in place
of $q_{\boldsymbol{\eta},\pm}$ in the RHS. Both conditional densities are
positive almost everywhere. Hence $Q_\pm\sim\muB$, and their
Radon--Nikodym derivatives satisfy
\[
 R_\pm\left(\boldsymbol{\eta}
                 +\sum_{i=1}^N\delta_{y_i}\right)
 =\frac{q_{\boldsymbol{\eta},\pm}(\mathbf y)}
        {p_{\boldsymbol{\eta}}(\mathbf y)},
 \qquad
 R_\pm\defi\frac{\dd Q_\pm}{\dd\muB}.
\]
In particular, by \eqref{eq:conditionalchangeofvariables},
\[
 R_\pm(\mathfrak T_{\phi,\pm}\boldsymbol{\gamma})
 =\frac{p_{\boldsymbol{\eta}}(\mathbf x)}
        {p_{\boldsymbol{\eta}}(H_\pm\mathbf x)j_\pm(\mathbf x)},
 \qquad
 \boldsymbol{\gamma}
 =\boldsymbol{\eta}+\sum_{i=1}^N\delta_{x_i}.
\]

Define
\[
 L_\pm(\boldsymbol{\gamma})
 \defi\log R_\pm(\mathfrak T_{\phi,\pm}\boldsymbol{\gamma}).
\]
Substituting \eqref{eq:orderedconditionaldensity}, we have
\begin{align} \label{e:ITJ}
 L_\pm(\boldsymbol{\gamma})
 ={}&-\sum_{i=1}^N\log h_\pm'(x_i)
 +\beta\sum_{1\leq i<j\leq N}
   \log\frac{|x_i-x_j|}
            {|h_\pm(x_i)-h_\pm(x_j)|} +\sum_{i=1}^N
   \log\frac{\omega(x_i\mid\boldsymbol{\eta})}
             {\omega(h_\pm(x_i)\mid\boldsymbol{\eta})}.
\end{align}
We add the two signs separately in these three terms.
For the Jacobians and the interior interactions in~\eqref{e:ITJ},
\[
 -\log h_+'(x)-\log h_-'(x)=J_{\phi,a}(x),
\]
and, for $x\neq y$,
\[
 \log\frac{|x-y|}{|h_+(x)-h_+(y)|}
 +\log\frac{|x-y|}{|h_-(x)-h_-(y)|}
 =K_{\phi,a}(x,y).
\]
For the exterior interactions in~\eqref{e:ITJ}, first use the same finite cutoff
$m$ for $\omega_n$ in~\eqref{e:LDM}. Since $\phi(u)=0$ for
$u\in\boldsymbol{\eta}$, direct multiplication gives
\[
 \log\frac{\omega_m(x\mid\boldsymbol{\eta})}
              {\omega_m(h_+(x)\mid\boldsymbol{\eta})}
 +\log\frac{\omega_m(x\mid\boldsymbol{\eta})}
              {\omega_m(h_-(x)\mid\boldsymbol{\eta})}
 =\beta\sum_{\substack{u\in\boldsymbol{\eta}\\|u|\leq m}}
       K_{\phi,a}(x,u).
\]
By Fubini's theorem, for almost every exterior configuration
the exterior products converge to positive finite limits for
Lebesgue-almost every $x\in\Lambda$. Since $h_\pm$ and their
inverses preserve Lebesgue-null sets, these limits exist
simultaneously at $x_i$ and $h_\pm(x_i)$ for almost every tuple. Thus the logarithms on the left
converge. The sums on the right increase with $m$, because
$K_{\phi,a}\geq0$. Passing to $m \to \infty$, it follows that
\[
 \log\frac{\omega(x\mid\boldsymbol{\eta})}
              {\omega(h_+(x)\mid\boldsymbol{\eta})}
 +\log\frac{\omega(x\mid\boldsymbol{\eta})}
              {\omega(h_-(x)\mid\boldsymbol{\eta})}
 =\beta\sum_{u\in\boldsymbol{\eta}}K_{\phi,a}(x,u)
\]
at the interior particle positions almost surely.

Combining the three contributions in~\eqref{e:ITJ} yields
\begin{equation}\label{eq:pairedconfigurationentropy}
 L_+(\boldsymbol{\gamma})+L_-(\boldsymbol{\gamma})
 =\sum_{x\in\boldsymbol{\gamma}}J_{\phi,a}(x)
  +\frac{\beta}{2}
   \sum_{\substack{x,y\in\boldsymbol{\gamma}\\x\neq y}}
       K_{\phi,a}(x,y)
 \defi S(\boldsymbol{\gamma}).
\end{equation}
Here the double sum counts ordered pairs, and
$J_{\phi,a}$ vanishes on $\Lambda^c$, and $K_{\phi,a}(x,y)=0$
when both $x$ and $y$ belong to $\Lambda^c$.

By unit intensity, the Campbell formulas, and Tonelli's theorem, and the estimates~\eqref{eq:entropykernelbounds} and~\eqref{e:EHE}, 
\begin{align*}
 \mathbb E_{\muB}[S]
 &=\int_\R J_{\phi,a}(x)\dd x
   +\frac{\beta}{2}\iint_{\R^2}
       r_\beta^{(2)}(x,y)K_{\phi,a}(x,y)\dd x\dd y\\
 &\leq\frac{a^2}{1-\delta^2}
 \left(\|\phi'\|_{\mathrm L^2(\R)}^2
       +\frac{\beta M_\beta}{2}
        \|\phi\|_{\dot{\mathrm H}^{1/2}}^2\right)
 <\infty.
\end{align*}

It remains to identify the expectations of $L_\pm$ with the
individual entropies. For $v\in\R$, write
$v^-\defi\max\{-v,0\}$. By the definition of $Q_\pm$,
\[
 \mathbb E_{\muB}[(L_\pm)^-]
 =\int(\log R_\pm)^-\dd Q_\pm
 =\int R_\pm(\log R_\pm)^-\dd\muB
 \leq\mathrm e^{-1},
\]
where we used $r(\log r)^-\leq\mathrm e^{-1}$ for $r>0$.
Moreover, \eqref{eq:pairedconfigurationentropy} implies
\[
 (L_\pm)^+\leq S+(L_\mp)^-.
\]
Thus both $L_+$ and $L_-$ are integrable, and
\[
 \mathbb E_{\muB}[L_\pm]
 =\int\log R_\pm\dd Q_\pm
 =\mathsf{Ent}(Q_\pm\mid\muB).
\]
Taking expectations in \eqref{eq:pairedconfigurationentropy}
now proves \eqref{eq:dlrentropyidentity}. The preceding estimate
for $\mathbb E_{\muB}[S]$ proves~\eqref{eq:dlrentropybound}.
\end{proof}

We now choose compactly supported deformations that agree with translations on expanding intervals. Define the even Lipschitz function
\[
 \chi(x)\defi
 \begin{cases}
  1,&|x|\leq1,\\
  2-|x|,&1<|x|<2,\\
  0,&|x|\geq2,
 \end{cases}
\]
and, for $R\geq1$, set
\begin{equation}\label{eq:scaledcutoff}
 \phi_R(x)\defi\chi(x/R).
\end{equation}

\begin{lemma}\label{lem:scaledcutoff}
For every $R\geq1$,
\begin{align}
 \|\phi_R\|_{\dot{\mathrm H}^{1/2}}^2
 &=\|\chi\|_{\dot{\mathrm H}^{1/2}}^2,
  \label{eq:scaledhhalf}\\*
 \int_\R|\phi_R'(x)|^2\dd x
 &=\frac2R,
  \label{eq:scaledderivative}\\*
 \Lip(\phi_R)&=\frac1R.
 \label{eq:scaledlip}
\end{align}
\end{lemma}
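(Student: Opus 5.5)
The three identities are elementary scaling computations for $\phi_R(x)=\chi(x/R)$. I would treat each one directly, in the order below.

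\emph{The $\dot{\mathrm H}^{1/2}$ seminorm.} I would substitute $x=Ru$, $y=Rv$ in the double integral defining $\|\phi_R\|_{\dot{\mathrm H}^{1/2}}^2$. The Jacobian of this change of variables is $\dd x\,\dd y=R^2\,\dd u\,\dd v$. The denominator becomes $|x-y|^2=R^2|u-v|^2$. The numerator becomes $|\phi_R(x)-\phi_R(y)|^2=|\chi(u)-\chi(v)|^2$. The two powers of $R$ cancel, which gives \eqref{eq:scaledhhalf}. This is an identity in $[0,\infty]$, and finiteness of $\|\chi\|_{\dot{\mathrm H}^{1/2}}^2$ follows from \eqref{e:EHE}, since $\chi$ is compactly supported and Lipschitz.

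\emph{The $\mathrm L^2$ norm of the derivative.} Almost everywhere we have $\phi_R'(x)=R^{-1}\chi'(x/R)$. By the definition of $\chi$, $|\chi'|=1$ on $1<|u|<2$ and $\chi'=0$ on $|u|<1$ and $|u|>2$; the finitely many kink points form a null set. Hence $|\phi_R'|^2=R^{-2}$ on the set $\{R<|x|<2R\}$, which has Lebesgue measure $2R$. Integrating gives $2R\cdot R^{-2}=2/R$, which is \eqref{eq:scaledderivative}.

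\emph{The Lipschitz constant.} First, $\chi$ is piecewise linear with slopes in $\{-1,0,1\}$, so $\Lip(\chi)\le1$. The constant is attained, because $\chi(1)-\chi(2)=1=|1-2|$; hence $\Lip(\chi)=1$. Composing with the dilation $x\mapsto x/R$ scales the Lipschitz constant exactly by $R^{-1}$. This gives \eqref{eq:scaledlip}.

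None of these steps is a real obstacle; the only point needing care is that the substitution in the first item is legitimate for a nonnegative integrand (Tonelli).
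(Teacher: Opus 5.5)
Your proposal is correct and follows the paper's proof: the change of variables $x=Ru$, $y=Rv$ gives the $\dot{\mathrm H}^{1/2}$ identity, and the fact that $|\phi_R'|=R^{-1}$ a.e.\ on $\{R<|x|<2R\}$ and $\phi_R'=0$ elsewhere gives the other two. Your explicit check that the Lipschitz bound is attained (e.g.\ at $x=R$, $y=2R$) fills in a small detail the paper leaves implicit.
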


\begin{proof}
The first identity follows from the change of variables $x=Ru$, $y=Rv$. The weak derivative has absolute value $R^{-1}$ on $\{R<|x|<2R\}$ and vanishes elsewhere, which proves the remaining identities.
\end{proof}

Let $\mathfrak T_{R,\pm}$ be the transformation of $\mathsf{Conf}(\R)$ induced by
\begin{equation}\label{eq:expandingtranslation}
 \mathsf T_{R,\pm}(x)\defi x\pm a\phi_R(x).
\end{equation}
For all sufficiently large $R$, these maps are increasing bi-Lipschitz homeomorphisms of $\R$, equal to the identity outside $[-2R,2R]$ and to translation by $\pm a$ on $[-R,R]$.

\begin{proposition}\label{prop:uniformentropy}
For every $a\in\R$, there are constants $R_a<\infty$ and $C_{\beta,a}<\infty$ such that
\begin{equation}\label{eq:uniformentropy}
 \sup_{R\in[R_a,\infty)}
 \left\{
 \mathsf{Ent}\left((\mathfrak T_{R,+})_*\muB\middle|\muB\right)
 +\mathsf{Ent}\left((\mathfrak T_{R,-})_*\muB\middle|\muB\right)
 \right\}
 \leq C_{\beta,a}.
\end{equation}
\end{proposition}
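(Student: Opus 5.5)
The plan is to apply Lemma~\ref{lem:dlrentropy} with $\phi=\phi_R$ and insert the scaling identities of Lemma~\ref{lem:scaledcutoff}. First we handle $a=0$. In that case $\mathfrak T_{R,\pm}$ is the identity, both entropies vanish, and we may take $R_a=1$ and $C_{\beta,0}=0$.

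For $a\neq0$, we set $R_a\defi\max\{1,2|a|\}$. By \eqref{eq:scaledlip}, every $R\geq R_a$ satisfies
\[
 \delta_R\defi|a|\Lip(\phi_R)=\frac{|a|}{R}\leq\frac12<1 .
\]
The function $\phi_R$ is compactly supported (in $[-2R,2R]$) and Lipschitz, so the hypotheses of Lemma~\ref{lem:dlrentropy} hold. In particular, the maps $\mathfrak T_{R,\pm}$ coincide with $\mathfrak T_{\phi_R,\pm}$ of \eqref{d:TP}.

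Next we apply \eqref{eq:dlrentropybound} with $\phi=\phi_R$. Using \eqref{eq:scaledderivative} and \eqref{eq:scaledhhalf}, this gives
\[
 \mathsf{Ent}\left((\mathfrak T_{R,+})_*\muB\middle|\muB\right)
 +\mathsf{Ent}\left((\mathfrak T_{R,-})_*\muB\middle|\muB\right)
 \leq\frac{a^2}{1-\delta_R^2}\left(\frac2R+\frac{\beta M_\beta}{2}\|\chi\|_{\dot{\mathrm H}^{1/2}}^2\right).
\]
Since $\delta_R\le 1/2$ and $R\geq1$, the right-hand side is at most
\[
 C_{\beta,a}\defi\frac{4a^2}{3}\left(2+\frac{\beta M_\beta}{2}\|\chi\|_{\dot{\mathrm H}^{1/2}}^2\right).
\]
This constant does not depend on $R$.

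It remains to check that $C_{\beta,a}$ is finite. The constant $M_\beta$ is finite by \eqref{eq:boundedtwopoint}. For $\chi$ itself, which is compactly supported and Lipschitz, \eqref{e:EHE} gives
\[
 \|\chi\|_{\dot{\mathrm H}^{1/2}}^2\leq 2\|\chi'\|_{\mathrm L^2}^2+8\|\chi\|_{\mathrm L^2}^2<\infty .
\]
Taking the supremum over $R\in[R_a,\infty)$ then yields \eqref{eq:uniformentropy}.

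There is no real obstacle in this argument. The only point needing care is that the $\dot{\mathrm H}^{1/2}$ term, which carries the long-range logarithmic interaction, does not grow with $R$. This is exactly the scale invariance recorded in \eqref{eq:scaledhhalf}. The Jacobian term meanwhile decays like $2/R$, and the factor $(1-\delta_R^2)^{-1}$ is controlled by the choice of $R_a$.
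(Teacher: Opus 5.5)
Your proposal is correct and follows the paper's proof exactly: you choose $R_a=\max\{1,2|a|\}$ so that $|a|\Lip(\phi_R)\le 1/2$, apply \eqref{eq:dlrentropybound} to $\phi_R$, and use the scaling identities of Lemma~\ref{lem:scaledcutoff}. Your separate treatment of $a=0$ is harmless but unnecessary, since the general argument already covers it. Your explicit constant $\frac{4a^2}{3}\bigl(2+\frac{\beta M_\beta}{2}\|\chi\|_{\dot{\mathrm H}^{1/2}}^2\bigr)$ simply makes the paper's ``uniformly bounded'' concrete.
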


\begin{proof}
Take $R_a=\max\{1,2|a|\}$. For $R\geq R_a$, Lemma~\ref{lem:scaledcutoff} gives
\[
 |a|\Lip(\phi_R)=\frac{|a|}{R}\leq\frac12.
\]
Hence Lemma~\ref{lem:dlrentropy} yields
\begin{align*}
&\mathsf{Ent}\left((\mathfrak T_{R,+})_*\muB\middle|\muB\right)
 +\mathsf{Ent}\left((\mathfrak T_{R,-})_*\muB\middle|\muB\right)\leq\frac{a^2}{1-a^2/R^2}
 \left(\frac2R+\frac{\beta M_\beta}{2}\|\chi\|_{\dot{\mathrm H}^{1/2}}^2\right),
\end{align*}
which is uniformly bounded for $R\geq R_a$.
\end{proof}

\section{Tail invariance and proofs of tail triviality}\label{sec:tail}

Recall that $\muB$ is concentrated on $\mathsf{Conf}_{\mathrm{s}}(\R)$. The \(\sigma\)-fields $\cF_D$ and $\cT$ retain their definitions on the ambient space $\mathsf{Conf}(\R)$; on $\mathsf{Conf}_{\mathrm{s}}(\R)$, the same notation denotes their traces.

The following elementary observation identifies the exact invariance implied directly by the tail \(\sigma\)-field: compactly supported homeomorphisms do not change a tail event.

Let $A \in\cT$, and let $s:\R\to\R$ be a homeomorphism that is the identity outside a compact set. Define its induced action on configurations by
\[
 \mathfrak{S}_s\!\left(\sum_{x\in\boldsymbol{\gamma}}\delta_x\right)
 \defi\sum_{x\in\boldsymbol{\gamma}}\delta_{s(x)}
 =s_*\boldsymbol{\gamma}.
\]
The induced map $\mathfrak S_s$ is continuous in the vague topology, since for $f\in C_c(\R)$ one has $\langle f,s_*\boldsymbol{\gamma}\rangle=\langle f\circ s,\boldsymbol{\gamma}\rangle$ and $f\circ s\in C_c(\R)$.
\begin{lemma}\label{lem:tailinvariance}
\begin{equation}
 \mathbf{1}_{A}\circ\mathfrak{S}_s
 =\mathbf{1}_{A}.
\end{equation}
\end{lemma}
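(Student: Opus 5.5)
Choose $R>0$ with $s(x)=x$ for all $|x|>R$; such an $R$ exists because $s$ is the identity outside a compact set. The plan is to show that $\mathfrak S_s$ leaves the restriction of every configuration to $[-R',R']^c$ unchanged for suitable $R'$, and then to use that $A$ is measurable with respect to the exterior $\sigma$-field $\cF_{[-R',R']^c}$. A factorization argument then shows that $\mathbf 1_A$ depends on $\boldsymbol{\gamma}$ only through $\boldsymbol{\gamma}_{[-R',R']^c}$. This gives the identity pointwise, for every $\boldsymbol{\gamma}\in\U(\R)$ and not merely $\muB$-a.e.

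The first step is the exterior identity. Since $s$ is a homeomorphism that equals the identity on $[-R,R]^c$, it maps $[-R,R]^c$ onto itself. Injectivity then forces it to map $[-R,R]$ onto $[-R,R]$. Hence, for any Borel $B\subset[-R,R]^c$, we have $s^{-1}(B)=B$, so
\[
(s_*\boldsymbol{\gamma})(B)=\boldsymbol{\gamma}(s^{-1}(B))=\boldsymbol{\gamma}(B).
\]
Thus $(\mathfrak S_s\boldsymbol{\gamma})_{[-R,R]^c}=\boldsymbol{\gamma}_{[-R,R]^c}$.

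The second step is the factorization. Because $A\in\cT\subset\cF_{[-R,R]^c}$, and $\cF_{[-R,R]^c}$ is generated by the countably many maps $\boldsymbol{\gamma}\mapsto\boldsymbol{\gamma}(B)$ with $B$ ranging over bounded intervals with rational endpoints in $[-R,R]^c$, the Doob--Dynkin lemma gives $\mathbf 1_A=g\circ\Pi$. Here $\Pi(\boldsymbol{\gamma})=(\boldsymbol{\gamma}(B_k))_k$ and $g$ is a measurable function. A more elementary route avoids Doob--Dynkin: the class of sets $E\in\cF_{[-R,R]^c}$ satisfying $\mathbf 1_E(\boldsymbol{\gamma})=\mathbf 1_E(\boldsymbol{\gamma}')$ whenever $\boldsymbol{\gamma}_{[-R,R]^c}=\boldsymbol{\gamma}'_{[-R,R]^c}$ is a $\sigma$-field containing the generators, so it is all of $\cF_{[-R,R]^c}$. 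Applying this with $\boldsymbol{\gamma}'=\mathfrak S_s\boldsymbol{\gamma}$ and the first step yields $\mathbf 1_A(\mathfrak S_s\boldsymbol{\gamma})=\mathbf 1_A(\boldsymbol{\gamma})$.

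No step here is a genuine obstacle; the only point needing care is the set-theoretic fact that $s$ preserves $[-R,R]$. If one prefers not to argue via injectivity, one can instead take $R'$ so large that both $s$ and $s^{-1}$ are the identity outside $[-R',R']$, and work with $[-R',R']^c$ throughout.
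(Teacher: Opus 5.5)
Your proof is correct and follows the paper's argument. You choose $R$ so that $s$ is the identity off $[-R,R]$, use injectivity to see that $s$ maps $[-R,R]$ into itself, deduce that $\mathfrak S_s\boldsymbol{\gamma}$ and $\boldsymbol{\gamma}$ have the same restriction to $[-R,R]^c$, and conclude from $A\in\cF_{[-R,R]^c}$. Your good-sets argument only makes explicit a step the paper leaves implicit: events in $\cF_{[-R,R]^c}$ depend on $\boldsymbol{\gamma}$ only through $\boldsymbol{\gamma}_{[-R,R]^c}$.
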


\begin{proof}
Choose $M>0$ such that $s$ is the identity outside $[-M,M]$. Injectivity implies that no point of $[-M,M]$ is mapped into its complement. Thus a configuration and its image under $\mathfrak S_s$ have the same restriction to $[-M,M]^c$. Since $A\in\cF_{[-M,M]^c}$, membership in $A$ is unchanged.
\end{proof}

Proposition \ref{prop:uniformentropy} now allows the compactly supported invariance from Lemma \ref{lem:tailinvariance} to pass to global translations. The resulting translation invariance gives stationarity of tail-conditioned laws in the first proof and allows mixing to be applied to tail indicators in the second proof.

\begin{proposition}\label{prop:tailtranslation}
Every $A \in\cT$ is translation invariant modulo $\muB$: for every $a\in\R$,
\begin{equation}\label{eq:tailtranslation}
 \mathbf{1}_{A}\circ\tau_a
 =\mathbf{1}_{A}
 \qquad\muB\text{-a.e.}
\end{equation}
\end{proposition}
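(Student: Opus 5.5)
We follow the mechanism outlined in \cref{sec:strategy}. Fix $a\in\R$ and $A\in\cT$, and put $h=\mathbf 1_A$. For $n\in\N$ let $F_n\defi\mathbb E_{\muB}[h\mid\cF_{[-n,n]}]$. By the forward martingale convergence theorem, $F_n\to h$ in $\mathrm L^1(\muB)$. This uses that $\cF_{[-n,n]}$ increases to $\mathscr B(\mathsf{Conf}(\R))$, since the vague Borel $\sigma$-field is generated by counts of bounded sets.

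Choose $R_n\geq\max\{R_a,\,n+|a|\}$. Then $\mathsf T_{R_n,+}$ coincides with $x\mapsto x+a$ on $[-n-|a|,n+|a|]$. Since $\mathsf T_{R_n,+}$ is an increasing bijection, the points of $\mathfrak T_{R_n,+}\boldsymbol{\gamma}$ lying in $[-n,n]$ are exactly the images of points of $\boldsymbol{\gamma}$ in $[-n-a,n-a]$. These images are the same as under $\tau_a$. Hence $(\mathfrak T_{R_n,+}\boldsymbol{\gamma})_{[-n,n]}=(\tau_a\boldsymbol{\gamma})_{[-n,n]}$. Because $F_n$ is $\cF_{[-n,n]}$-measurable, we get $F_n\circ\mathfrak T_{R_n,+}=F_n\circ\tau_a$ everywhere, once a genuinely $\cF_{[-n,n]}$-measurable version of $F_n$ is fixed. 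By Lemma~\ref{lem:tailinvariance}, applied with $s=\mathsf T_{R_n,+}$, we also have $h\circ\mathfrak T_{R_n,+}=h$.

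Stationarity of $\muB$ gives
\[
\|h\circ\tau_a-h\|_{\mathrm L^1(\muB)}\le \|h\circ\tau_a-F_n\circ\tau_a\|_{\mathrm L^1(\muB)}+\|F_n\circ\mathfrak T_{R_n,+}-h\circ\mathfrak T_{R_n,+}\|_{\mathrm L^1(\muB)}+\|h-F_n\|_{\mathrm L^1(\muB)}.
\]
The first and third terms both equal $\|h-F_n\|_{\mathrm L^1(\muB)}$. The middle term equals $\int|F_n-h|\,\rho_n\,\dd\muB$, where $\rho_n\defi\dd(\mathfrak T_{R_n,+})_*\muB/\dd\muB$.

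The main step is to show that this middle term tends to zero. Proposition~\ref{prop:uniformentropy} gives $\int\rho_n\log\rho_n\,\dd\muB\le C_{\beta,a}$ uniformly in $n$. Here each entropy is nonnegative, so each is individually bounded by the sum. Then $\{\rho_n\}$ is uniformly integrable by de la Vallée-Poussin. Concretely, for $K>1$,
\[
\int_{\{\rho_n>K\}}\rho_n\,\dd\muB\le\frac{C_{\beta,a}+\mathrm e^{-1}}{\log K}.
\]
Since $|F_n-h|\le1$, split the integral according to whether $\rho_n\le K$ or $\rho_n>K$:
\[
\int|F_n-h|\rho_n\,\dd\muB\le K\|F_n-h\|_{\mathrm L^1(\muB)}+\frac{C_{\beta,a}+\mathrm e^{-1}}{\log K}.
\]
First let $n\to\infty$, then let $K\to\infty$.

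It follows that $\|h\circ\tau_a-h\|_{\mathrm L^1(\muB)}=0$, which is \eqref{eq:tailtranslation}. The only delicate points besides this uniform-integrability step are measurability bookkeeping: the choice of version of $F_n$, and the requirement that $\mathsf T_{R_n,+}$ be a homeomorphism, which holds for $R\ge R_a$.
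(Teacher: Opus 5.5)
Your proposal is correct and follows essentially the same route as the paper: the martingale approximation $F_n=\mathbb E_{\muB}[h\mid\cF_{[-n,n]}]$, the local agreement $F_n\circ\mathfrak T_{R_n,+}=F_n\circ\tau_a$, the tail invariance $h\circ\mathfrak T_{R_n,+}=h$ from Lemma~\ref{lem:tailinvariance}, and uniform integrability of $\rho_n$ from the uniform entropy bound of Proposition~\ref{prop:uniformentropy}. The differences are cosmetic: your triangle inequality carries a redundant but harmless third term $\|h-F_n\|_{\mathrm L^1(\muB)}$, you derive the uniform-integrability bound $(C_{\beta,a}+\mathrm e^{-1})/\log K$ directly rather than citing Eberle, and you check the local agreement via bijectivity of $\mathsf T_{R_n,+}$ rather than via the estimate $|\mathsf T_{R_n,+}(x)|\geq|x|-|a|$.
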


\begin{proof}
Fix $a\in\R$ and write $h=\mathbf{1}_{A}$. For $n\in\N$, define
\[
 F_n\defi\mathbb{E}_{\muB}[h\mid\cF_{[-n,n]}].
\]
Let
\[
 r_n:\mathsf{Conf}(\R)\longrightarrow\mathsf{Conf}([-n,n]),
 \qquad
 r_n(\boldsymbol{\gamma})\defi\boldsymbol{\gamma}_{[-n,n]},\qquad n\in\N.
\]
The two configuration spaces are standard Borel, and $\cF_{[-n,n]}=\sigma(r_n)$. Thus, by the Doob--Dynkin lemma, we may choose a version of $F_n$ and a Borel function $f_n:\mathsf{Conf}([-n,n])\to[0,1]$ such that
\[
 F_n=f_n\circ r_n.
\]

Moreover,
\[
 \sigma\!\left(\bigcup_{n\in\N}\cF_{[-n,n]}\right)
 =\mathscr{B}(\mathsf{Conf}(\R)).
\]
Hence the martingale convergence theorem gives
\begin{equation}\label{eq:localtailapproximation}
 0\leq F_n\leq1,
 \qquad
 \|F_n-h\|_{\mathrm{L}^1(\muB)}\longrightarrow0.
\end{equation}

Choose $R_n\to\infty$ so that $R_n>n+|a|+1$ and $R_n\geq R_a$ for every $n\in\N$. The restrictions of $\mathfrak{T}_{R_n,+}\boldsymbol{\gamma}$ and $\tau_a\boldsymbol{\gamma}$ to $[-n,n]$ coincide for every configuration $\boldsymbol{\gamma}$. Indeed, a point whose translate lies in $[-n,n]$ starts in $[-n-|a|,n+|a|]\subset[-R_n,R_n]$, where $\mathsf{T}_{R_n,+}(x)=x+a$; a point starting outside $[-R_n,R_n]$ cannot be moved into $[-n,n]$, since
\[
 |\mathsf{T}_{R_n,+}(x)|\geq|x|-|a|>n.
\]
Consequently,
\begin{equation}\label{eq:localtranslationagreement}
 F_n\circ\mathfrak{T}_{R_n,+}=F_n\circ\tau_a.
\end{equation}
On the other hand, Lemma \ref{lem:tailinvariance} gives
\[
 h\circ\mathfrak{T}_{R_n,+}=h.
\]

Set $\nu_n=(\mathfrak{T}_{R_n,+})_*\muB$ for $n\in\N$. Stationarity of $\muB$, \eqref{eq:localtranslationagreement}, and the preceding tail identity give
\begin{align*}
 \|h\circ\tau_a-h\|_{\mathrm{L}^1(\muB)}
 &\leq
 \|(h-F_n)\circ\tau_a\|_{\mathrm{L}^1(\muB)}
 +\|(F_n-h)\circ\mathfrak{T}_{R_n,+}\|_{\mathrm{L}^1(\muB)}\\
 &=\|h-F_n\|_{\mathrm{L}^1(\muB)}
 +\int_{\mathsf{Conf}(\R)}
 |F_n(\boldsymbol{\gamma})-h(\boldsymbol{\gamma})|\nu_n(\dd\boldsymbol{\gamma}).
\end{align*}
The first term tends to zero by \eqref{eq:localtailapproximation}. We verify the convergence of the second term using the standard entropy criterion for uniform integrability; see, e.g., ~\citet[Lemma~4.11(2), pp.~143--144]{Eberle2016StochasticAnalysis}.  Proposition~\ref{prop:uniformentropy} gives
\[
 C\defi\sup_{n\in\N}\mathsf{Ent}(\nu_n\mid\muB)<\infty.
\]
Set $\rho_n=\dd\nu_n/\dd\muB$, and write $\log^+u=\max\{\log u,0\}$ for $u>0$, with $0\log^+0=0$. The negative part of $u\log u$ is at most $\mathrm e^{-1}$, so 
\[
 \sup_{n\in\N}\int_{\mathsf{Conf}(\R)}\rho_n\log^+\rho_n\dd\muB\leq C+\mathrm e^{-1}.
\]
By, e.g., ~\citet[Lemma~4.11(2), pp.~143--144]{Eberle2016StochasticAnalysis}, the densities $\rho_n$ are uniformly integrable:
\begin{equation}\label{eq:entropydensitytails}
 \lim_{M\to+\infty}\sup_{n\in\N}
 \int_{\{\boldsymbol{\gamma}\in\mathsf{Conf}(\R):\rho_n(\boldsymbol{\gamma})>M\}}\rho_n(\boldsymbol{\gamma})\,\muB(\dd\boldsymbol{\gamma})=0.
\end{equation}
Put $G_n=|F_n-h|\in[0,1]$. Splitting according to whether $\rho_n\leq M$ gives, for every real $M>1$,
\begin{align*}
 \int_{\mathsf{Conf}(\R)}G_n\dd\nu_n
 &=\int_{\mathsf{Conf}(\R)}G_n\rho_n\dd\muB\\
 &\leq M\int_{\mathsf{Conf}(\R)}G_n\dd\muB+\sup_{k\in\N}\int_{\{\boldsymbol{\gamma}\in\mathsf{Conf}(\R):\rho_k(\boldsymbol{\gamma})>M\}}\rho_k(\boldsymbol{\gamma})\,\muB(\dd\boldsymbol{\gamma}).
\end{align*}
For fixed $M$, the first term tends to zero by \eqref{eq:localtailapproximation}. Letting $M\to\infty$ then makes the second term tend to zero by \eqref{eq:entropydensitytails}. This proves \eqref{eq:tailtranslation}.
\end{proof}

We now give two proofs of \cref{thm:main}, both using Proposition~\ref{prop:tailtranslation}.

\paragraph{DLR uniqueness input.}
We recall the form of~\citet[Theorem~1.1]{AssiotisDLRUniqueness} that we need. Let $K^\beta_\Lambda$ denote the canonical resampling operator introduced in~\citet[Section~2.1]{DLR}, in the formulation of~\citet[Definitions~2.1--2.2]{AssiotisDLRUniqueness}: it replaces the configuration in a bounded Borel region $\Lambda$ of positive Lebesgue measure according to the canonical $\beta$-DLR kernel, retaining the exterior and the number of interior particles. A stationary Borel probability measure $P$ satisfying these canonical DLR equations and $\mathcal W^{\mathrm{div}}(P)<\infty$ equals $\muB$. Here $\mathcal W^{\mathrm{div}}$ is the divergence-compatible renormalised electric energy of Lebl\'{e}-Serfaty \cite{LebleSerfaty2017} recalled in the form we need in ~\citet[Definition~2.6]{AssiotisDLRUniqueness}. Then, in the notation of that definition and the lower bound stated immediately after it,
\begin{equation}\label{eq:electricenergyinput}
 w\defi\widetilde{\mathcal W}^{\mathrm{div}}\geq-C_{\mathrm{el}},
 \qquad
 \mathcal W^{\mathrm{div}}(P)=\int_{\mathsf{Conf}(\R)}w(\boldsymbol{\gamma})\,P(\dd\boldsymbol{\gamma}),
\end{equation}
where $w$ is a function of the configuration and $C_{\mathrm{el}}<\infty$ is universal.

Moreover, \mbox{$\mathcal W^{\mathrm{div}}(\muB)<\infty$}. This finite-energy assertion follows from~\citet[Corollary~1.2 and Sections~2.7.3--2.7.4]{LebleSerfaty2017}, as recorded in~\citet[Lemma~5.2]{AssiotisDLRUniqueness}. The canonical DLR equations were proved in~\citet[Theorems~1.1 and~2.1]{DLR}; their formulation used here is recorded in~\citet[Lemma~2.9]{AssiotisDLRUniqueness}.

\begin{proof}[First proof of \cref{thm:main}: DLR uniqueness]
Let $A\in\cT$. If $\muB(A)=0$, there is nothing to prove. Otherwise put
\[
 p\defi\muB(A)>0,\qquad h\defi\mathbf1_A,\qquad
 \nu_A(B)\defi\frac{\muB(B\cap A)}p,\quad B\in\mathscr B(\mathsf{Conf}(\R)).
\]
Thus $\nu_A$ is a Borel probability measure and
\[
 \frac{\dd\nu_A}{\dd\muB}=\frac hp\leq\frac1p.
\]
By Proposition~\ref{prop:tailtranslation}, $h\circ\tau_t=h$ $\muB$-a.e. for each $t\in\R$. Stationarity of $\muB$ therefore gives, for every bounded Borel $F:\mathsf{Conf}(\R)\to\R$,
\begin{align*}
 \nu_A(F\circ\tau_t)
 &=\frac1p\int_{\mathsf{Conf}(\R)}h(\boldsymbol{\gamma})F(\tau_t\boldsymbol{\gamma})\,\muB(\dd\boldsymbol{\gamma})\\*
 &=\frac1p\int_{\mathsf{Conf}(\R)}h(\tau_{-t}\boldsymbol{\gamma})F(\boldsymbol{\gamma})\,\muB(\dd\boldsymbol{\gamma})
 =\nu_A(F).
\end{align*}
Hence $\nu_A$ is stationary.

By the number rigidity and the DLR theorem, $K^\beta_\Lambda F=\Gamma^\beta_\Lambda F$ $\muB$-a.e. 
This equality holds also $\nu_A$ a.e.~by absolute continuity.
For every bounded Borel $\Lambda$ of positive Lebesgue measure, $A\in\cF_{\Lambda^c}$, since $A\in\cF_{[-R,R]^c}\subset\cF_{\Lambda^c}$ whenever $\Lambda\subset[-R,R]$. Canonical resampling preserves the exterior, so
\[
 K^\beta_\Lambda(hF)=hK^\beta_\Lambda F
 \qquad\muB\text{-a.e.}
\]
The canonical DLR identity for $\muB$, applied to $hF$, consequently gives
\[
 \nu_A(K^\beta_\Lambda F)
 =\frac1p\muB(hK^\beta_\Lambda F)
 =\frac1p\muB(K^\beta_\Lambda(hF))
 =\frac1p\muB(hF)
 =\nu_A(F).
\]
Thus $\nu_A$ satisfies the canonical DLR equations.

Finally, the nonnegative shift in \eqref{eq:electricenergyinput} and the bounded density imply
\begin{align*}
 \mathcal W^{\mathrm{div}}(\nu_A)+C_{\mathrm{el}}
 &=\frac1p\int_A\bigl(w(\boldsymbol{\gamma})+C_{\mathrm{el}}\bigr)\,\muB(\dd\boldsymbol{\gamma})\leq\frac{\mathcal W^{\mathrm{div}}(\muB)+C_{\mathrm{el}}}{p}<\infty.
\end{align*}
All hypotheses of~\citet[Theorem~1.1]{AssiotisDLRUniqueness} therefore hold, and $\nu_A=\muB$. It follows that $1=\nu_A(A)=\muB(A)$, completing the proof.
\end{proof}

\begin{proof}[Second proof of \cref{thm:main}: correlation decay]
For a bounded Borel set $B\subset\R$, define 
\[
 \mathsf N_B(\boldsymbol{\gamma})\defi\boldsymbol{\gamma}(B),
 \qquad\boldsymbol{\gamma}\in\mathsf{Conf}(\R).
\]
Note that, this differs from the notation $N_\Lambda(\boldsymbol{\eta})$ for the particle number determined by an exterior configuration due to the number rigidity. Let
\begin{gather*}
 \mathcal I\defi\{[a,b):a,b\in\mathbb Q,\ a<b\},
\\
 \mathcal V\defi\Span_{\R}\left(
 \{1\}\cup\left\{\prod_{j=1}^p\mathsf N_{I_j}:
 \begin{array}{l}
 p\in\N,\ I_1,\ldots,I_p\in\mathcal I,\\
 I_1,\ldots,I_p\text{ pairwise disjoint}
 \end{array}\right\}\right),
\end{gather*}
where $\Span_{\R}$ denotes the real linear span. The overcrowding estimate of~\citet[Theorem~1]{HolcombValko}, after dilation to our intensity-one normalization, gives $\mathbb E_{\muB}[4^{\mathsf N_K}]<\infty$ for every bounded Borel set $K\subset\R$. In particular, $\mathcal V\subset\mathrm L^2(\muB)$.

We first show that $\mathcal V$ is dense in $\mathrm L^2(\muB)$. Let $K$ be a finite union of intervals in $\mathcal I$. For each $n\in\N$, choose a finite partition $\mathcal P_n(K)$ of $K$ into intervals in $\mathcal I$, each of length at most $1/n$. For $K=\varnothing$, use the empty partition. Define
\begin{align*}
 V_K(\boldsymbol{\gamma})&\defi\mathbf1_{\{\mathsf N_K(\boldsymbol{\gamma})=0\}},\\
 V_{K,n}(\boldsymbol{\gamma})&\defi
 \prod_{I\in\mathcal P_n(K)}\bigl(1-\mathsf N_I(\boldsymbol{\gamma})\bigr),
 \qquad\boldsymbol{\gamma}\in\mathsf{Conf}(\R).
\end{align*}
Expanding the finite product shows that $V_{K,n}\in\mathcal V$, and
\[
 |V_{K,n}(\boldsymbol{\gamma})|^2\leq4^{\mathsf N_K(\boldsymbol{\gamma})},
 \qquad\boldsymbol{\gamma}\in\mathsf{Conf}(\R),\quad n\in\N.
\]
On simple locally finite configurations, every cell eventually contains at most one particle. Thus $V_{K,n}\to V_K$ as $n\to\infty$, $\muB$-almost surely and, by dominated convergence, in $\mathrm L^2(\muB)$. The void events $\{\boldsymbol{\gamma}:\mathsf N_K(\boldsymbol{\gamma})=0\}$, with $K$ ranging over finite unions of intervals in $\mathcal I$, including $\varnothing$, form a $\pi$-system. Their traces on $\mathsf{Conf}_{\mathrm s}(\R)$ generate its Borel $\sigma$-field: for $I\in\mathcal I$ and every simple configuration $\boldsymbol{\gamma}$,
\[
 \mathsf N_I(\boldsymbol{\gamma})
 =\lim_{n\to\infty}\sum_{J\in\mathcal P_n(I)}
 \bigl(1-V_J(\boldsymbol{\gamma})\bigr).
\]
Since $\muB$ is concentrated on simple configurations, the monotone-class theorem makes the linear span of these $V_K$ dense in $\mathrm L^2(\muB)$, and hence so is $\mathcal V$.

Let $p,q\in\N$, let $I_1,\ldots,I_p\in\mathcal I$ be pairwise disjoint, and let $J_1,\ldots,J_q\in\mathcal I$ be pairwise disjoint. Set
\[
 P\defi\prod_{i=1}^p\mathsf N_{I_i},\qquad
 Q\defi\prod_{j=1}^q\mathsf N_{J_j},\qquad
 J+t\defi\{x+t:x\in J\}\quad(J\subset\R,\ t\in\R).
\]
The intervals $I_1,\ldots,I_p,J_1+t,\ldots,J_q+t$ are pairwise disjoint for all sufficiently large real $t$. Since $\mathsf N_J\circ\tau_{-t}=\mathsf N_{J+t}$, the correlation estimate of~\citet[Theorem~1.2]{dumaz2026longrangecorrelationsinebetapoint} gives
\begin{align*}
 \mathbb E_{\muB}\bigl[P(Q\circ\tau_{-t})\bigr]
 &=\mathbb E_{\muB}\left[
 \left(\prod_{i=1}^p\mathsf N_{I_i}\right)
 \left(\prod_{j=1}^q\mathsf N_{J_j+t}\right)\right]\\
 &\longrightarrow\mathbb E_{\muB}[P]\mathbb E_{\muB}[Q]
 \qquad(t\to+\infty).
\end{align*}
Choose $R>0$ such that $I_i,J_j\subset(-R,R)$ for every $1\leq i\leq p$ and $1\leq j\leq q$. After translating both families by $-R$ and multiplying coordinates by $2\pi$, we apply that theorem to the fixed intervals $2\pi(I_i-R)$ and $2\pi(J_j+R)$, with separation $2\pi(t-2R)\to\infty$. The limit is immediate when either factor is constant; linearity therefore gives it for every $P,Q\in\mathcal V$.

For $F,G\in\mathrm L^2(\muB)$ and $P,Q\in\mathcal V$, stationarity and Cauchy--Schwarz now give
\begin{align*}
 &\limsup_{t\to+\infty}
 \left|\mathbb E_{\muB}\bigl[F(G\circ\tau_{-t})\bigr]
       -\mathbb E_{\muB}[F]\mathbb E_{\muB}[G]\right|\\
 &\qquad\leq
 \|F-P\|_{\mathrm L^2(\muB)}\|G\|_{\mathrm L^2(\muB)}
 +\|P\|_{\mathrm L^2(\muB)}\|G-Q\|_{\mathrm L^2(\muB)}.
\end{align*}
Density makes the right-hand side arbitrarily small, proving mixing as $t\to+\infty$; stationarity gives the limit as $t\to-\infty$ as well.
Finally, for $A\in\cT$ and $h=\mathbf1_A$, Proposition~\ref{prop:tailtranslation} gives $h\circ\tau_{-t}=h$ $\muB$-a.e. for each $t\in\R$. Therefore
\[
 \muB(A)=\mathbb E_{\muB}\bigl[h(h\circ\tau_{-t})\bigr]
 \xrightarrow[t\to+\infty]{}\mathbb E_{\muB}[h]^2=\muB(A)^2,
\]
so $\muB(A)\in\{0,1\}$.
\end{proof}

\begingroup
\interlinepenalty=10000
\setlength{\bibsep}{5pt plus 1pt minus 1pt}
\bibliographystyle{plainnat}
\bibliography{References_final_version}
\endgroup

\bigskip 

\noindent{\sc School of Mathematics, University of Edinburgh, James Clerk Maxwell Building, Peter Guthrie Tait Rd, Edinburgh EH9 3FD, U.K.}\newline
\href{mailto:theo.assiotis@ed.ac.uk}{\small theo.assiotis@ed.ac.uk}

\bigskip

\noindent{\sc Department of Mathematical Science, Durham University, DH1 3LE, Durham, United Kingdom}\newline
\href{mailto:kohei.suzuki@durham.ac.uk}{\small kohei.suzuki@durham.ac.uk}

\end{document}